\newcommand{\myauthors}{Gupte, Kalinowski, Rigterink, Waterer}
\ifodd\value{page}\small \scshape \myauthors 
\title[Extensions of some bilinear functions]{Extended formulations for convex hulls of \\ some bilinear functions}
\author{Akshay Gupte$^1$}
\address{$^1$School of Mathematical \& Statistical Sciences, Clemson
  University, Clemson, USA}
\thanks{Research of AG was supported by the US Office of Naval Research
  grant N00014-16-1-2725.}
\author{Thomas Kalinowski$^{2,3}$}
\address{$^{2}$School of Science and Technology, University of New England, Armidale, Australia}
\author{Fabian Rigterink$^3$}
\author{Hamish Waterer$^3$}
\address{$^3$School of Mathematical \& Physical Sciences, University of
  Newcastle, Callaghan, Australia} \thanks{Research of TK, FR, HW was supported by the ARC Linkage
  grant no. LP110200524, Hunter Valley Coal Chain Coordinator (\href{http://www.hvccc.com.au}{hvccc.com.au}) and Triple Point Technology
  (\href{http://www.tpt.com}{tpt.com}).}
\email[A.~Gupte]{agupte@clemson.edu}
\email[T.~Kalinowski]{tkalinow@une.edu.au}
\email[F.~Rigterink]{fabian.rigterink@gmail.com}
\email[H.~Waterer]{hamish.waterer@newcastle.edu.au}
\date{\today}
\theoremstyle{plain}
\newtheorem{lemma}{Lemma}
\newtheorem*{observation}{Observation}
\newtheorem{theorem}{Theorem}
\newtheorem{corollary}{Corollary}
\theoremstyle{definition}
\theoremstyle{remark}
\newtheorem{example}{Example}
\newtheorem{remark}{Remark}
\newcommand{\RR}{\mathbb{R}}
\newcommand{\reals}{\mathbb{R}}
\newcommand{\ints}{\mathbb{Z}}
\newcommand{\nats}{\mathbb{N}}
\newcommand{\eq}{\,=\,}
\renewcommand{\leq}{\leqslant}
\renewcommand{\geq}{\geqslant}
\renewcommand{\le}{\leqslant}
\renewcommand{\ge}{\geqslant}
\newcommand{\vect}[1]{\bm{#1}}
\renewcommand{\vec}[1]{\bm{#1}}
\newcommand{\f}{\psi}
\DeclareMathOperator{\conv}{conv}
\DeclareMathOperator{\cav}{cav}
\DeclareMathOperator{\vex}{vex}
\DeclareMathOperator{\LB}{LB}
\DeclareMathOperator{\UB}{UB}
\DeclareMathOperator{\sign}{sign}
\newcommand{\QP}{\mathit{QP}}
\newcommand{\cut}{\mathsf{CUT}}
\renewcommand{\P}{\mathit{P}}
\newcommand{\X}{\mathit{X}}
\newcommand{\M}{\mathit{M}}
\begin{document}

\begin{abstract}
  We consider the problem of characterizing the convex hull of the graph of a bilinear function $f$
  on the $n$-dimensional unit cube $[0,1]^n$. Extended formulations for this convex hull are
  obtained by taking subsets of the facets of the Boolean Quadric Polytope (BQP). Extending existing
  results, we propose a systematic study of properties of $f$ that guarantee that certain classes
  of BQP facets are sufficient for an extended formulation. We use a modification of Zuckerberg's
  geometric method for proving convex hull characterizations [Geometric proofs for convex hull
  defining formulations, Operations Research Letters \textbf{44} (2016), 625--629] to prove some
  initial results in this direction. In particular, we provide small-sized extended formulations for bilinear functions whose corresponding graph is either a cycle with arbitrary edge weights or a clique or an almost clique with unit edge weights.
\end{abstract}

\keywords{extended formulation, convex hull, bilinear, quadratic, boolean quadric polytope}
\subjclass[2010]{90C57, 90C26, 52B12}

\maketitle

\section{Introduction}
An important technique in global optimization is the construction of convex envelopes for nonconvex
functions, and there is a significant amount of literature on characterizing convex hulls of graphs
of nonlinear functions, beginning with \citep{Rikun97,Sherali97a}; see also the book
\citep{locatelli2013book}. \citeauthor{Rikun97} studies the question when this convex hull is a
polyhedron and gives a complete characterization for functions on polyhedral domains. Even if the
convex hull is a polyhedron there might be a very large number of facets, and this is reminiscent of
a situation which is quite common in combinatorial optimization: for a natural mixed integer
programming (MIP) formulation the convex hull of the feasible set can be described explicitly, but
it is a polytope whose number of facets is exponential in the instance size. One approach that has
been successful in this area is the use of {\em extended formulations} \citep{Conforti10}. The basic
idea is to introduce more variables in order to reduce the number of constraints. A reformulation of
the convex hull with a polynomial number of constraints and polynomially many additional variables
is called a {\em compact extended formulation}, and this is a key ingredient in so-called
lift-and-project methods, and other related MIP formulation techniques
\citep{Balas93,sherali1990hierarchy}. In this paper, we use a similar technique: instead of
describing the convex hull of a graph of a bilinear function in the original variable space, we seek
to describe it in a lifted space with as few inequalities as possible.

A bilinear function is a function $f : [0,1]^n \to \RR$ of the form
\[f(\vec{x}) = \sum_{1 \leq i < j \leq n} a_{ij} x_ix_j\]
with coefficients $a_{ij} \in \RR$. 
The convex hull of the graph of $f$ is the set 
\[\X(f) := \conv \{ (\vec{x},z) \in [0,1]^n \times \reals : \ z = f(\vec{x}) \},\]
which is a polytope since 
\begin{equation}\label{eq:xf2}
\X(f) = \conv \{ (\vec{x}, z) \in \{ 0, 1 \}^n \times \reals : \ z = f(\vec{x}) \}
\end{equation}
which was proved in~\citep{Rikun97,Sherali97a}. These functions arise in many problem areas; see \citep{deygupte2013pooling,gupte2016pooling,siampaper} and the references therein.

For bilinear functions, a natural setting for an extended formulation is to introduce additional
variables $y_{ij}$ representing the product $x_{i}x_{j}$ of two original variables for
$a_{ij}\neq 0$. The classical McCormick inequalities~\cite{McCormick76} for relaxing each bilinear
term are
\begin{align}
y_{ij} &\geq 0, & y_{ij} &\leq x_i, & y_{ij} &\leq x_j, & x_i+x_j-y_{ij} &\leq 1, \label{eq:mccormick} 
\end{align}
and they are exact at $0$--$1$ points, that is, they imply $y_{ij} = x_{i}x_{j}$ when
$x_{i},x_{j}\in\{0,1\}$. The McCormick relaxation is the polytope
\[\M := \left\{(\vec{x},\vec{y},z) \in [0,1]^{n(n+1)/2}\times\reals : \ z = \sum_{1 \leq i < j \leq n} a_{ij}y_{ij}, \ \eqref{eq:mccormick} \text{ for all } 1 \leq i < j \leq n \right\},\]
whose projection is typically a relaxation of $\X(f)$. The cases where the projection of $\M$ is
actually equal to $\X(f)$ have been characterized in \citep{Misener15} and independently in
\citep{Boland16}, and there are also some results in this regard for multilinear functions
\citep{Luedtke12}. In general, the McCormick relaxation can be quite weak~\cite{Boland16}, and the
purpose of this paper is to investigate extended formulations for $\X(f)$ obtained as strengthenings of the McCormick relaxation.

As is customary in the literature, let the functions $\vex[f] : [0,1]^n \rightarrow \RR$ and
$\cav[f] : [0,1]^n \rightarrow \RR$, denoting the convex and concave envelopes, respectively, of $f$
over $[0,1]^{n}$, be defined as
\begin{align*}
  \vex[f](\vec{x}) &= \min \{ z : \, (\vec{x},z) \in \X(f) \}, & \cav[f](\vec{x}) &= \max \{ z :
  \, (\vec{x},z) \in \X(f) \},
\end{align*}
so that
\[\X(f) = \{ (\vec{x},z) \in [0,1]^n \times \RR : \ \vex[f](\vec{x}) \leq z \leq \cav[f](\vec{x})\}.\]
Introducing variables $y_{ij}$ to represent the products $x_ix_j$, we are interested in describing
$\X(f)$ in terms of the $x$- and $y$-variables. To be more precise, we define a function
$\pi[f]:\reals^{n}\times\reals^{n(n-1)/2}\to\reals^{n+1}$ by
\[\pi[f](\vec{x},\vec{y})=\left(\vec{x},\sum_{1 \leq i < j \leq n}a_{ij}y_{ij}\right),\]
and extend it to the power set of $\reals^{n}\times\reals^{n(n-1)/2}$ in the usual way:
\[\pi[f](\P)=\{\pi[f](\vec{x},\vec{y}):\ (\vec{x},\vec{y})\in \P\}\]
for every $\P\subseteq\reals^{n}\times\reals^{n(n-1)/2}$. For a polytope $\P$, let the functions
$\LB_{\P}[f] : [0,1]^n \to \RR$ and $\UB_{\P}[f] : [0,1]^n \to \RR$ be defined as
\begin{alignat*}{2}
  \LB_{\P}[f](\vec{x}) &= \min \left\{ \sum_{1 \leq i < j \leq n} a_{ij}y_{ij}:\
    (\vec{x},\vec{y}) \in \P \right\} &&= \min\{ z: (\vec{x}, z) \in \pi[f](\P) \},\\
  \UB_{\P}[f](\vec{x}) &= \max \left\{ \sum_{1 \leq i < j \leq n} a_{ij}y_{ij}:\
    (\vec{x},\vec{y}) \in \P \right\} &&= \max\{ z: (\vec{x}, z) \in \pi[f](\P) \},
\end{alignat*}
respectively, so that 
\begin{equation}
\pi[f](\P) = \{ (\vec{x},z) \in [0,1]^n \times \RR : \LB_{\P}[f](\vec{x}) \leq z \leq
  \UB_{\P}[f](\vec{x}) \}.
\end{equation}
Our aim is to find a polytope $\P$ such that $\X(f) = \pi[f](\P)$, so that this $\P$ is a compact extended formulation. Observe that 
\begin{equation}\label{eq:enviff}
\X(f) = \pi[f](\P) \iff \LB_{\P}[f](\vec{x}) = \vex[f](\vec{x}), \ \UB_{\P}[f](\vec{x}) = \cav[f](\vec{x}), \ \text{ for all } \vec{x} \in [0,1]^n.
\end{equation}

There are constructive methods for deriving extended formulations of $\X(f)$ with exponentially many
variables and facet-defining inequalities, such as using the extreme point characterization in
\eqref{eq:xf2} or the nontrivial approach of using the Sherali-Adams hierarchy
\citep{sherali1990hierarchy} which can also be applied to more general nonlinear functions
\citep{ballerstein2014extended}. We restrict our attention to finding extended formulations in the
quadratic space of $(\vec{x},\vec{y})$ variables.

\citet{Padberg89} introduced the {\em Boolean Quadric Polytope} (BQP), which is the convex hull of
the binary vectors satisfying the McCormick inequalities \eqref{eq:mccormick},
\begin{equation}\label{eq:bqp}
\QP := \conv \left \{ (\vec{x}, \vec{y}) \in \{ 0, 1 \}^{n(n+1)/2}: \
    \eqref{eq:mccormick} \text{ for all } 1 \leq i < j \leq n \right \}.
\end{equation}
Since the McCormick inequalities are exact at $0$--$1$ points, we have
\[ \QP = \conv \left \{ (\vec{x}, \vec{y}) \in \{ 0, 1 \}^{n(n+1)/2}: \ y_{ij} = x_{i}x_{j} \text{ for all } 1 \leq i < j \leq n \right \}.\]  It follows from \eqref{eq:xf2} that $\QP$ is an extended formulation for $\X(f)$:
\[\X(f)=\pi[f](\QP),\]
so that  \eqref{eq:enviff}  implies $\vex[f](\vec{x}) = \LB_{\QP}[f](\vec{x})$ and
$\cav[f](\vec{x}) = \UB_{\QP}[f](\vec{x})$. In fact, \citet[Proposition 5]{burer2009nonconvex} 
showed that
\[\QP = \conv \left \{ (\vec{x}, \vec{y}) \in [ 0, 1 ]^{n(n+1)/2}: \ y_{ij} = x_{i}x_{j} \text{ for all } 1 \leq i < j \leq n \right \}.\]

\citet{Padberg89} also extended the definition of BQP in the following sense:
\[\QP(G) = \conv \left \{ (\vec{x}, \vec{y}) \in \{ 0, 1 \}^{n+m}: \ \text{\eqref{eq:mccormick} for all } ij\in E \right\},\]
where $G=(V,E)$ is the edge weighted graph associated with the bilinear function $f$. This graph has
the vertex set $V = [n] = \{ 1,\dots, n \}$, the edge set $E= \{\{i,j\} : \, a_{ij} \neq 0\}$, and
the edge weights are given by $a_{ij}$. Note that this construction gives a one-to-one
correspondence between bilinear functions and edge weighted graphs (without loops). Also,
$\QP = \QP(K_{n})$ where $K_{n}$ is the complete graph, and $\QP(G)$ is the projection of $\QP$
obtained by projecting out $y_{ij}$'s corresponding to $a_{ij}=0$. Henceforth, we express
\[ f(\vec{x}) = \sum_{ij \in E} a_{ij} x_i x_j,\] where we use $ij$ instead of $\{i,j\}$ to denote
an edge when there is no danger of ambiguity. We call an edge $ij \in E$ \emph{positive} if
$a_{ij} > 0$ and \emph{negative} if $a_{ij} < 0$. Abusing notation, we sometimes consider $\pi[f]$
as a function $\reals^{n+m}\to\reals$ where the value of $m$ is clear from the context (the number
of edges in the graph corresponding to the considered function $f$). This allows us to write
$\X(f)=\pi[f](\QP(G))$.

The polytope $\QP$, and in general $\QP(G)$, has an exponential number of facets, not all of which are known and some of the known facets are NP-hard to separate \citep{Padberg89,deza1997geometry,letchford2014new,barahona1986cut}. Furthermore, there are many graphs for which $\QP(G)$ does not have a polynomial-sized extended formulation \citep{avis2015extension}. If we do not assume any structure on $f$ and allow $f$ to be arbitrary, then a complete characterization of $\QP(G)$ seems necessary for convexifying $f$ due to the following observation.

\begin{remark}
  For every facet $\vect \alpha^T\vect x+\vect \beta^T\vect y\leq \alpha_{0}$ of $\QP(G)$ there
  exists a bilinear function $f$ such that
  $\vect \alpha^T\vect x+\vect \beta^T\vect y\leq \alpha_{0}$ is necessary to describe $\X(f)$ in
  the sense that $\pi[f](P)\supsetneq X(f)$ for the polytope $P$ obtained from $\QP$ by omitting
  $\vect \alpha^T\vect x+\vect \beta^T\vect y\leq \alpha_{0}$. To see this, just take
  $f(\vect x)=\sum_{ij\in E}\beta_{ij}x_ix_j$. Then $f(x)\leq \alpha_{0}-\vect \alpha^T\vect x$ for
  every $\vect x\in[0,1]^n$, but there exists $(\vect x^*,\vect y^*)\in P$ with
  $\vect \alpha^T\vect x^*+\vect \beta^T\vect y^*> \alpha_{0}$, and therefore
  $\pi[f](\vect x^*,\vect y^*)\not\in X(f)$.
\end{remark}

\subsection*{Our approach}
The polytope $\QP(G)$ has a very rich combinatorial structure that is not known explicitly and is
even hard to generate algorithmically. Also, since $\QP(G)$ is an extension of $\X(f)$ and two
polytopes can project onto the same polytope, it is natural to expect that for certain bilinear
functions $f$, or equivalently weighted graphs $G$, fully characterizing $\QP(G)$ may be much more
than what is actually necessary for convexifying $f$. These facts motivate us to search for graphs
$G$ for which we can identify polynomial-sized polytopes $\P\supseteq\QP(G)$ such that
$\pi[f](\P)=\X(f)$. We would also like such a $P$ to be minimal in the following sense.  An extended
formulation $\P\supseteq\QP(G)$ of $\X(f)$ is said to be \emph{minimal} if omitting any
facet-defining inequality of $\P$ leads to a polytope $\P^{\prime}\supsetneq \P$ with
$\pi[f](\P^{\prime})\supsetneq\X(f)$. In other words, we want to identify minimal classes of valid
inequalities for $\QP$ which still ensure that the polytope defined by these inequalities satisfies
$\pi[f](P)=X(f)$, the motivation being that $P\subseteq\reals^{n(n+1)/2}$ might have significantly
fewer facets than $X(f)\subseteq \reals^{n+1}$.

\subsection*{Contributions and Outline} 
We identify two graph families for which a polynomial number of commonly known valid inequalities
for $\QP(G)$ are sufficient to convexify the corresponding $f$. These main results are stated in
\textsection\ref{sec:applications}, following a short description of the valid inequalities
considered by us. Another main contribution of this paper is to use a new technique for proving the
tightness of our extended formulations. This technique is inspired by a recent work in the literature on
geometric characterization of $0$--$1$ polytopes, and is described in
\textsection\ref{sec:geometric_method}. We remove some of the technicalities of a result from the 
literature and apply our simplified result to state a description of the convex hulls of graphs of
arbitrary nonlinear, and bilinear, functions over $\{0,1\}^{n}$. This is then used to prove our main
results in \textsection\ref{sec:mainproofs}. We hope that our successful use of this technique will
inspire others to use it to prove more results on extended formulations for $\X(f)$ or other
combinatorial polytopes. 
    
\section{Background and Our Results}\label{sec:background}
To state our main results, we first need to describe the families of valid inequalities and facets
of BQP that we are interested in. Other classes of valid inequalities are also available in the
literature, many of which are obtained by exploiting the linear bijection between BQP and the cut
polytope \citep{de1990cut,deza1997geometry,barahona1986cut}. Separation questions related to some of
these inequalities have also been addressed in \citep{letchford2014new}.

\subsection{Padberg's inequalities for BQP}\label{subsec:some_facets}
The inequalities derived by \citet{Padberg89} can be written down conveniently using the following
notation, for $S\subseteq V$, $\hat{E} \subseteq E$:  
\[
  E(S) \eq \{ ij \in E: \ i, j \in S \}, \quad x(S) \eq \sum_{i \in S} x_i, \quad  y(\hat{E}) \eq \sum \limits_{ij \in \hat{E}} y_{ij}.
\]
The inequalities that are relevant for our results are the following.
\begin{description}
\item[Triangle inequalities] for any $i,j,k\in V$,
\begin{subequations}\label{eq:triangle}
\begin{align}
x_i + x_j + x_k - y_{ij} - y_{ik} - y_{jk} & \leq 1, \label{eq:triangle1} \\
- x_i + y_{ij} + y_{ik} - y_{jk} & \leq 0, \label{eq:triangle2} \\
- x_j + y_{ij} - y_{ik} + y_{jk} & \leq 0, \label{eq:triangle3} \\
- x_k - y_{ij} + y_{ik} + y_{jk} & \leq 0, \label{eq:triangle4}
\end{align}
\end{subequations}
\item[Clique inequalities] for any $S \subseteq V$ with $\lvert S\rvert \geq 3$ and integer $\alpha$,
  $1 \leq \alpha \leq \lvert S\rvert - 2$, 
\begin{align}
\alpha x(S) - y(E(S)) & \leq \frac{\alpha (\alpha + 1)}{2} \label{eq:clique1}
\end{align}
\item[Cycle inequalities] for every cycle $C \subseteq E$ and every subset $D \subseteq C$ with odd
cardinality,
  \begin{align}
x(V_0) - x(V_1) + y(C \setminus D) - y(D) & \leq (\lvert D\rvert-1)/2, \label{eq:odd_cycle}
\end{align}
where 
\begin{align*}
V_0 & = \{ u \in V: \ e \cap \hat{e} = \{u\} \text{ for some } e, \hat{e} \in D \}, \\
V_1 & = \{ u \in V: \ e \cap \hat{e} = \{u\} \text{ for some } e, \hat{e} \in C \setminus D \}.
\end{align*}
\end{description}
Using only subsets of the known facets of $\QP$, one can define different relaxations, and there
exist a number of results on conditions on $G$ which guarantee that $\QP(G)$ is equal to certain
relaxations. The LP relaxation of $\QP(G)$ is the polytope in $[0,1]^{n + |E|}$
defined by the McCormick inequalities \eqref{eq:mccormick}. \citeauthor{Padberg89} showed this is equal to $\QP(G)$ if and only if $G$ is acyclic. The \emph{cycle relaxation} of $\QP(G)$, which is a strengthening of the McCormick relaxation by adding cycle inequalities~(\ref{eq:odd_cycle}) for each chordless cycle $C\subseteq E$ and each subset $D\subseteq C$ with $\lvert D\rvert$ odd, is equal to $\QP(G)$ if and only if $G$ is $K_4$-minor-free (series-parallel graphs) \citep{de1990cut,Padberg89}. More characterizations for the cycle relaxation were obtained recently by  \citep{michini2016strength}. Another set of known results about Padberg's inequalities is that the triangle inequalities give the Chv\'atal-Gomory closure of the LP relaxation of $\QP(K_{n})$ \citep{boros1992chvatal}, and this was recently generalized to the odd cycle inequalities giving the Chv\'atal-Gomory closure of the LP relaxation of $\QP(G)$ for arbitrary $G$ \citep{Bonami2018}.

All of these literature results are about characterizing $\QP(G)$, which, as we have explained in
the introduction, might sometimes be more than what is required for convexifying $f$. One of our main 
results is that if $G$ is a chordless cycle of length $n$, which is $K_{4}$-minor-free and so the
corresponding $f$ is convexified by all the $2^{n-1}$ cycle inequalities \eqref{eq:odd_cycle}, then
only \emph{two} cycle inequalities are needed to get a polytope $\P$ with $\pi[f](\P)=\X(f)$. This
extends to cactus graphs, that is, graphs in which every edge is contained in at most one cycle. The
next section presents precise statements of our results. We clarify that none of these results implies anything about the description of $\QP(G)$.

\subsection{Main results}\label{sec:applications}
When $f(\vec{x}) = \sum_{1\leq i < j \leq n} x_{i}x_{j}$, so that the corresponding graph is the
complete graph $K_{n}$ with all edge weights equal to 1, then it is known that the McCormick
inequalities $y_{ij}\leq\min\{x_i,x_j\}$ together with the clique inequalities \eqref{eq:clique1}
with $S=V$ suffice to describe the convex hull of the graph of $f$. In fact, this implies that the
only lower bounds on the $y$-variables come from inequalities which can be written in the form
$y(E)\geq\dots sx(V)-\binom{s+1}{2}$, so the convex envelope $\vex[f]$ can be written in terms of the original variables
$x_1,\dots,x_n$ and $z=y(E)$, and this is precisely the convex envelope characterization proved
in \cite{Rikun97,Sherali97a}. We provide an alternative proof of this result in
\textsection\ref{sec:cliques}. As an extension, our first main result addresses the case of $K_{n}^{-}$,
which is the graph obtained from $K_n$ by deleting the edge between $n-1$ and $n$, with unit
weights, for which the bilinear function is
$f(\vec{x}) = \sum_{1 \leq i < j \leq n-1} x_{i}x_{j}\,+\, \sum_{i=1}^{n-2}x_{i}x_{n}$.

\begin{theorem}[Almost complete]\label{thm:clique_minus}
  If $G = K_n^-$ and all edge weights are equal to $1$, then $\X(f) = \pi[f](\P)$ where
  $\P\subseteq[0,1]^{n(n+1)/2}$ is described by $y(E)\geq 0$, the McCormick inequalities $y_{ij}\leq x_i$ and
  $y_{ij}\leq x_j$ for $1\leq i<j\leq n$, together with the $3(n-2)$ inequalities
  \begin{align}
    2x_i+x_{n-1}+x_n-y_{i,n-1}-y_{in}&\leq 2&&i=1,\dots,n-2\label{eq:clique_minus_0}\\
    s\left[x\left(V\setminus\{n-1,n\}\right)+\frac{x_{n-1}+x_n}{2}\right]\qquad\qquad\qquad
    &\nonumber\\
    -y\left(E\left(V\setminus\{n-1,n\}\right)\right)-\frac12\sum_{i=1}^{n-2}\left(y_{i,n-1}+y_{in}\right) &\leqslant \binom{s+1}{2}, && s=1,\dotsc,n-2,\label{eq:clique_minus_1}\\
    sx(V)-y(E)-y_{n-1,n} &\leqslant \binom{s+1}{2}, && s=1,\dotsc,n-2.\label{eq:clique_minus_3}
  \end{align}
Moreover, this polytope $\P$ is a minimal extension of $\X(f)$.
\end{theorem}

Constraints~\eqref{eq:clique_minus_0} are sums of McCormick inequalities
$x_i+x_{n-1}-y_{i,n-1}\leq 1$ and $x_i+x_{n}-y_{in}\leq 1$. The other inequalities in the above
theorem come from Padberg's clique inequalities:~(\ref{eq:clique_minus_1}) are the averages of the
clique inequalities for the two maximal cliques in $G$ and~(\ref{eq:clique_minus_3}) are the clique
inequalities for $K_n$. It should be of no surprise
that the non-McCormick inequalities required to describe $\X(f)$ all have negative coefficients on
the $y$ variables, because only $\vex[f](\vec{x})$ is unknown since two of the McCormick
inequalities for every $y_{ij}$ are known to describe $\cav[f](\vec{x})$ when edge weights are
non-negative (\cite{Luedtke12,Tawarmalani12}, see also Corollary~\ref{cor:cav} below). An interesting feature of this theorem is that the
variable $y_{n-1,n}$ is used although it does not correspond to an edge in $G$. The point in
Figure~\ref{fig:K5-} illustrates that the clique inequalities for the cliques in the given graph are
not sufficient in general. The point satisfies the clique inequalities for the seven 3-cliques, and
for the two $4$-cliques, as well as the cycle inequalities for all cycles in $K_5^-$, but
$\pi[f](\vect x,\vect y)=(1/2,1/2,1/2,3/4,1/4,\,3/2)\not\in X(f)$, because it violates the
inequality
\[2(x_1+x_2+x_3+x_4)+x_5-z\leq 3\]
which can be obtained by using $y_{45}\leq x_5$ to eliminate $y_{45}$ from a clique inequality for the $K_5$.

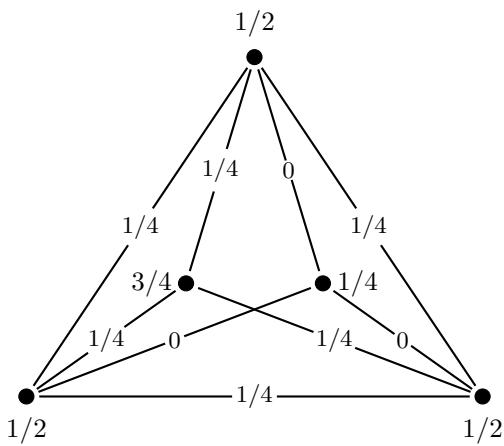
\begin{figure}[htb]
  \centering
  \begin{tikzpicture}[scale=1.5,every node/.style={circle,fill=black,draw,inner sep=2pt,outer
      sep=2pt}]
    \tikzset{l/.style={draw=none,fill=white,inner sep=.1pt,outer sep=.1pt}}
    \node[label={[label distance=-2mm]below:$1/2$}] (1) at (0,0) {};
    \node[label={[label distance=-2mm]below:$1/2$}] (2) at (4,0) {};
    \node[label={[label distance=-2mm]above:$1/2$}] (3) at (2,3) {};
    \node[label={[label distance=-2mm]left:$3/4$}] (4) at (1.4,1) {};
    \node[label={[label distance=-2mm]right:$1/4$}] (5) at (2.6,1) {};
    \draw[thick] (1) -- (2) node[midway,l] {{\small $1/4$}};
    \draw[thick] (1) -- (3) node[midway,l] {{\small $1/4$}};
    \draw[thick] (1) -- (4) node[midway,l] {{\small $1/4$}};
    \draw[thick] (1) -- (5) node[midway,l] {{\small $0$}};
    \draw[thick] (2) -- (3) node[midway,l] {{\small $1/4$}};
    \draw[thick] (2) -- (4) node[midway,l] {{\small $1/4$}};
    \draw[thick] (2) -- (5) node[midway,l] {{\small $0$}};
    \draw[thick] (3) -- (4) node[midway,l] {{\small $1/4$}};
    \draw[thick] (3) -- (5) node[midway,l] {{\small $0$}};
    \end{tikzpicture}
    \caption{A point satisfying the clique and cycle inequalities involving only variables
      corresponding to edges of $K_5^-$.}\label{fig:K5-}
  \end{figure}

From Theorem~\ref{thm:clique_minus}, we can also read off a description of $\vex[f](\vect x)$ in
terms of the original variables: $\vex[f](\vect x)=\max\{0,\,z_1,\,z_2\}$, where
\begin{align*}
  z_1 &= \max\left\{sx(V)-\min\{x_{n-1},x_n\}-\binom{s+1}{2}\,:\,1\leq s\leq n-2\right\},\\
  z_2 &=
        \max\Big\{s\left[x\left(V\setminus\{n-1,n\}\right)+\frac{x_{n-1}+x_n}{2}\right]-\binom{s+1}{2}\\
  &\qquad\qquad\qquad +\frac12\sum_{i=1}^{n-2}\max\{0,\,2x_i+x_{n-1}+x_n-2\}\,:\,1\leq s\leq n-2\Big\}.
\end{align*}

The second main result is about chordless cycles. Let $C_{n}$ denote an $n$-cycle with edges
$\{i, i + 1\}$ for $i \in [n - 1]$, and the edge $\{1, n\}$.  The vertex set and edge set are each in bijection with $[n]$ and hence indexed by $[n]$. For ease of notation, all indices have to be read modulo $n$ in the obvious way. In particular,  $n \equiv 0$ and $n+1\equiv
1$. For $i=1,\dots,n$, edge $\{i,i+1\}$ is referred to as edge $i$ and $a_{i, i + 1}$ is written as $a_i$. The index set $[n]$ can be partitioned as $[n]=E^-\cup E^+$ based on the signs of the coefficients:
\[E^- = \{i\,:\,a_i < 0\}, \quad  E^+  = \{i\:\,a_i > 0 \}.\]
We define $V^-$ (resp. $V^+$) to be the set of vertices $i\in[n]$ such that both of the edges incident with
$i$ correspond to negative (resp. positive) terms in $f(\vect x)$. More formally,
\[V^- = \{ i \in [n] : \ \{i-1,i\}\subseteq E^- \}, \quad V^+ = \{ i \in [n] : \ \{i - 1, i\}\subseteq E^+ \}. 
\]
which, in general, may not partition $[n]$.

\begin{theorem}[Cycles] \label{thm:cycles} 
For $G = C_{n}$, we have $\pi[f](\P) = \X(f)$ where
  $\P \subseteq [0, 1]^{2 n}$ is the polytope described by the McCormick
  inequalities~\eqref{eq:mccormick} and
\begin{align}
x(V^-) - x(V^+) + y(E^+) - y(E^-) & \leq \left \lfloor \frac{\lvert E^- \rvert}{2} \right \rfloor, \label{eq:cycle_1} \\
x(V^+) - x(V^-) + y(E^-) - y(E^+) & \leq \left \lfloor \frac{\lvert E^+ \rvert}{2} \right \rfloor. \label{eq:cycle_2}
\end{align}
Moreover, inequality~\eqref{eq:cycle_1} (resp.~\eqref{eq:cycle_2}) can be omitted if and only if~$\lvert E^- \rvert$
(resp.~$\lvert E^+ \rvert$) is even, and then $P$ is a minimal extension of $X(f)$.
\end{theorem}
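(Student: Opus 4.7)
The plan is to prove the two inclusions $\X(f) \subseteq \pi[f](\P)$ and $\pi[f](\P) \subseteq \X(f)$ separately. The first is the easy half: whenever present (i.e.\ whenever the corresponding $|E^-|$ or $|E^+|$ is odd), each of \eqref{eq:cycle_1} and \eqref{eq:cycle_2} is an instance of Padberg's odd cycle inequality \eqref{eq:odd_cycle}, taken with the full cycle $C = E$ and odd subset $D = E^-$ or $D = E^+$ respectively. Hence both are valid for $\QP(C_n)$, so $\QP(C_n) \subseteq \P$, and projecting yields $\X(f) = \pi[f](\QP(C_n)) \subseteq \pi[f](\P)$.

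For the harder inclusion, I would invoke the modified geometric method of \textsection\ref{sec:geometric_method}. This reduces the task to the following: for every $(\vec{x}^*, \vec{y}^*) \in \P$ one must exhibit Lebesgue-measurable subsets $A_1, \ldots, A_n \subseteq [0,1]$ with $\lambda(A_i) = x_i^*$ for every $i$ and $\sum_{i=1}^n a_i \lambda(A_i \cap A_{i+1}) = \sum_{i=1}^n a_i y_i^*$ (cyclic indices), so that the point $(\vec{x}^*, \sum_i a_i y_i^*)$ is realized as the expectation of $(\mathbf{1}_{A_1}(t), \ldots, \mathbf{1}_{A_n}(t), f(\mathbf{1}_{A}(t)))$ under uniform $t \in [0,1]$ and hence lies in $\X(f)$. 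My proposed construction first breaks the cycle at the edge $\{n, 1\}$, reducing it to a path, and places $A_1, \ldots, A_n$ as unions of intervals in $[0,1]$ so that $\lambda(A_i \cap A_{i+1}) = y_i^*$ for $i = 1, \ldots, n-1$. This is always possible because the McCormick relaxation is tight on a tree, and the bounds $\max(0, x_i^* + x_{i+1}^* - 1) \leq y_i^* \leq \min(x_i^*, x_{i+1}^*)$ enforced by $\P$ are exactly what is needed. The closing intersection $\tilde{y}_n := \lambda(A_n \cap A_1)$ is not directly prescribed and varies over a controlled interval as the relative offsets of the interval families defining each $A_i$ are slid along $[0,1]$.

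The crux is to show that the achievable range for $\tilde{y}_n$ (or, more generally, for a coordinated perturbation of several of the intersections) contains a value making $\sum_{i=1}^n a_i \tilde{y}_i$ equal to the target $\sum_{i=1}^n a_i y_i^*$, and I anticipate this to be where the bulk of the technical work lies. The two cycle inequalities in $\P$ are precisely the feasibility conditions for this last step: they cut off exactly those $\vec{y}^*$ for which no coherent placement of $A_n$ relative to $A_1$ can achieve a compatible weighted sum, with the sign pattern of the $a_i$ around the cycle dictating which of \eqref{eq:cycle_1} and \eqref{eq:cycle_2} is binding. For the \emph{moreover} clause, when $|E^-|$ is even I would separately show that \eqref{eq:cycle_1} is implied by the McCormick inequalities together with \eqref{eq:cycle_2} by exhibiting an explicit nonnegative combination of these that dominates \eqref{eq:cycle_1}; this is a self-contained combinatorial argument that does not require the geometric method, and the symmetric case $|E^+|$ even is analogous.
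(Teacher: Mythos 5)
Your treatment of the inclusion $\X(f)\subseteq\pi[f](\P)$ is fine and matches the paper: the two inequalities are Padberg cycle inequalities (with $D=E^-$ resp.\ $D=E^+$), hence valid for $\QP(C_n)$, and the inclusion follows by projection. The problem is the other inclusion: you have correctly identified \emph{what} must be shown, but the step you yourself flag as ``where the bulk of the technical work lies'' --- that after placing $A_1,\dotsc,A_n$ along the broken cycle the closing intersection $\lambda(A_n\cap A_1)$ (or a coordinated perturbation) can always be tuned to hit the target exactly when \eqref{eq:cycle_1}--\eqref{eq:cycle_2} hold --- is the entire content of the theorem, and you only assert it. Nothing in the proposal quantifies the achievable range of $\lambda(A_n\cap A_1)$ as a function of the placements, nor shows that its endpoints are controlled by the two cycle inequalities; without that, the argument does not close.

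For comparison, the paper does not attempt to realize an arbitrary $(\vect x^*,\vect y^*)\in\P$. It fixes $\vect x$ and, via \eqref{eq:enviff}, reduces the claim to $\UB_\P[f](\vect x)\leq\cav[f](\vect x)$ and $\LB_\P[f](\vect x)\geq\vex[f](\vect x)$. Over the fiber, $\UB_\P[f](\vect x)$ is the LP $\max\{\sum_i a_iy_i:\eta_i\leq y_i\leq\mu_i,\ y(E^+)-y(E^-)\leq A\}$ with a single non-box constraint coming from \eqref{eq:cycle_1}. One then (i) constructs sets $X_i$ by a bucket-filling rule, choosing the edge $\{n,1\}$ with $\lvert a_n\rvert$ minimal to absorb the whole ``defect'' $\delta_n$, so that $\cav[f](\vect x)\geq\sum_{i\in E^+}a_i\mu_i+\sum_{i\in E^-}a_i\eta_i-\lvert a_n\rvert\delta_n$; (ii) proves by induction, with a lengthy case analysis, the exact identity $\delta_n=\sum_{i\in E^+}\mu_i-\sum_{i\in E^-}\eta_i-A$ when $\delta_n>0$ (this is precisely the quantitative version of your ``achievable range'' claim); and (iii) certifies that this value equals $\UB_\P[f](\vect x)$ by writing down an explicit feasible dual solution. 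Steps (ii) and (iii) have no counterpart in your proposal. A further structural remark: matching $y_i^*$ exactly for $i<n$ and then tuning $y_n$ is more than is needed and not obviously achievable for interior points of the fiber; it suffices (and is what the paper does) to realize the two extreme values of $\sum_i a_iy_i$ over the fiber. Your route for the ``moreover'' clause (showing the even-cardinality inequality is implied by the remaining constraints) differs from the paper's --- which simply never uses that inequality in the corresponding envelope bound when $\delta_n=0$ --- and is plausible, since cycle inequalities with even $\lvert D\rvert$ are McCormick-implied, but it too is left unproved.
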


The polytope $\P$ in Theorem~\ref{thm:cycles} is parametrized by edge weights $\vect a$ and should
be read as $\P_{\vect a}$, since \eqref{eq:cycle_1} and \eqref{eq:cycle_2} are constructed using the
sign pattern on the edge weights. Obviously, since $\P$ has only two cycle inequalities, it is a
weak relaxation of $\QP(C_{n})$, which we know is given by all the $2^{n-1}$ odd-cycle inequalities
due to $C_{n}$ being $K_{4}$-minor-free. When $\abs{E^{-}}$ is odd, inequality~\eqref{eq:cycle_1} is
Padberg's cycle inequality~\eqref{eq:odd_cycle} corresponding to the odd subset $D = E^{-}$ in the
graph $C_{n}$. This is because $D = E^{-} = E(C_{n})\setminus E^{+}$ and every vertex in $C_{n}$
having exactly two edges incident on it implies that $V_{0} = V^{-}$ and $V_{1} = V^{+}$. If
$\abs{E^{-}}$ is even, we will show in the proof of Theorem~\ref{thm:cycles} that
inequality~\eqref{eq:cycle_1} is a linear combination of McCormick inequalities. Analogous arguments
hold for $E^{+}$ and \eqref{eq:cycle_2}. Note that inequalities~\eqref{eq:cycle_1} and
\eqref{eq:cycle_2} do not use any non-edge variables $y_{ij}$; in contrast,
Theorem~\ref{thm:clique_minus} presents a minimal extension for the chordal graph $K_{n}^{-}$ using
a non-edge variable $y_{n-1,n}$.

Theorem~\ref{thm:cycles} implies that if the bilinear function corresponds to an even cycle
$C_{n}$ having both $\abs{E^{+}}$ and $\abs{E^{-}}$ even, then the McCormick inequalities are
sufficient to convexify the function. This implication is a special case of the following characterization by
\citet[Theorem 4]{Boland16}: for any bilinear function $f$, the McCormick relaxation projects onto
$\X(f)$ if and only if \emph{every} cycle in the graph $G$ of $f$ has both $\abs{E^{+}}$ and
$\abs{E^{-}}$ even. This naturally raises the question of what can be said about bilinear functions with odd cycles. In general, we should not expect to convexify a bilinear function using extended formulation for each cycle in the function, since the function may have large extension complexity, whereas any cycle has a small extended formulation given in Theorem~\ref{thm:cycles}. However, we show that for cactus graphs, which are graphs 
whose cycles are edge-disjoint, or equivalently, any two cycles have at most one common vertex, the bilinear function is convexified by considering each cycle individually.

\begin{theorem}\label{thm:cactus}
If $G$ is a cactus with $k$ cycles, then for any edge weight vector $\vec{a}$, $\X(f)$ is described by the McCormick inequalities and at most $2k$ cycle inequalities.
\end{theorem}

This result is argued using the following consequence of the method described in Section~\ref{sec:geometric_method}: if $f$ and $g$ are two bilinear functions which share at most one variable, then $X(f+g)$ can be easily described in terms of $X(f)$ and $X(g)$ (see Corollary~\ref{cor:combination} for a precise statement). 

We conclude this section by illustrating the reduction in the number of required inequalities for
small examples. In Table~\ref{tab:facet_numbers} we compare the numbers of facets of
$X(f)\subseteq\reals^{n+1}$ to the numbers of inequalities describing a polytope
$P\subseteq\reals^{n(n+1)/2}$ with $\pi[f](P)=X(f)$. The facet numbers for $X(f)$ are determined
using \texttt{polymake}~\cite{assarf2017computing}, while the numbers in the columns for the
polytopes $P$ are determined as follows.
\begin{table}[htb]
  \renewcommand{\arraystretch}{1.3}
  \centering
  \caption{Numbers of facets for $X(f)$ and for our extended formulations.}\label{tab:facet_numbers}
      \begin{tabular}{r@{\hskip .5cm}rr@{\hskip 1cm}rr@{\hskip 1cm}rr}
\toprule
& \multicolumn{2}{c}{$G=K_n$} & \multicolumn{2}{c}{$G=K_n^-$} & \multicolumn{2}{c}{$G=C_n$} \\
\cmidrule(lr){2-3}\cmidrule(lr){4-5}\cmidrule(lr){6-7}
$n$ & $\X(f)$ & $\P$ & $\X(f)$ & $\P$ & $\X(f)$ & $\P$ \\
\midrule
3 &     15 & 15 &     12 &  16 &  15 & 20 \\
4 &     36 & 24 &     34 &  27 &  26 & 26 \\
5 &    135 & 35 &    120 &  40 &  63 & 32 \\
6 &    738 & 48 &    636 &  55 &  118 & 38 \\
7 &  5,061 & 63  &  4,376 & 72 &  255 & 44 \\
8 & 40,344 & 80  & 35,372 & 91 &  498 & 50 \\ \bottomrule
    \end{tabular}  
  \end{table}
  For $G=K_n$ with unit coefficients, we can choose $P\subseteq\reals^{n(n+1)/2}$ described by the
  following $n(n+2)$ inequalities:
  \begin{itemize}
  \item $2n$ variable bounds $0\leq x_i\leq 1$ for $i\in[n]$,
  \item $n(n-1)$ McCormick upper bounds $y_{ij}\leq x_i$ and $y_{ij}\leq x_j$ for $ij\in E$,
  \item $n-1$ inequalities $y(E)\geq sx(V)-s(s+1)/2$ for $s\in[n-1]$, and
  \item one inequality $y(E)\geq 0$.    
  \end{itemize}
  For $G=K^-_n$ with unit coefficients, the following $n^2+4n-4$ inequalities are sufficient:
  \begin{itemize}
  \item $2n$ variable bounds $0\leq x_i\leq 1$ for $i\in[n]$,
  \item $n(n-1)$ McCormick inequalities $y_{ij}\leq x_i$ and $y_{ij}\leq x_j$,
  \item $3n-6$
    inequalities~\eqref{eq:clique_minus_0},~\eqref{eq:clique_minus_1},~\eqref{eq:clique_minus_3},
  \item one inequality $y(E)\geq 0$.    
  \end{itemize}
  For $G=C_n$, with arbitrary coefficients, the following $6n+2$ inequalities are sufficient:
  \begin{itemize}
  \item $2n$ variable bounds $0\leq x_i\leq 1$ for $i\in[n]$,
  \item $4n$ McCormick inequalities~\eqref{eq:mccormick}, and
  \item two inequalities~\eqref{eq:cycle_1} and \eqref{eq:cycle_2}.
  \end{itemize}

\section{A Geometric Characterization of Combinatorial Polytopes}\label{sec:geometric_method}

\subsection{Zuckerberg's method}
Zuckerberg~\cite{ZuckerbergPhD,Zuckerberg16} developed a technique to prove convex hull
characterizations for subsets of $\{0,1\}^n$. In this section, we simplify one of the technical
results from this work and extend it to the convex hulls of graphs of arbitrary functions
$\f:\{0,1\}^n\to\reals$.

We are interested in the convex hull of a set $\mathcal F\subseteq\{0,1\}^n$. Any such $\mathcal F$
can be represented as a finite combination of unions, intersections and complementations of the
sets
\[A_i=\{\vect\xi\in\{0,1\}^n\,:\,\xi_i=1\},\quad i=1,\dots,n,\]
  and we fix such a representation $F(A_1,\dots,A_n)$.  For
instance, the set $\X(f)$ for the function $f:[0,1]^2\to[0,1]$ given by $f(x_1,x_2)=x_1x_2$ is the
convex hull of the set
\[\mathcal F=\{(0,0,0),\,(0,1,0),\,(1,0,0),\,(1,1,1)\},\]
which is given by the variable bounds $0\leq x_1,x_2,x_3\leq 1$ and the McCormick inequalities
\begin{align*}
  x_3 &\leq x_1, & x_3&\leq x_2, & x_3&\geq x_1+x_2-1.
\end{align*}
In this case $\mathcal F$ can be represented as
\begin{multline}\label{eq:zucker_example}
\mathcal F=F(A_1,A_2,A_3)=\left(A_1\cap A_2\cap A_3\right)\cup\left(\overline{A_1\cup A_2\cup
    A_3}\right)\cup\left(A_1\cap\overline{A_2\cup A_3}\right)\\
\cup\left(A_2\cap\overline{A_1\cup A_3}\right), 
\end{multline}
where $\overline{\,\cdot\,}$ indicates the complement in $\{0,1\}^3$.

The main result in \citep[Theorem 7]{Zuckerberg16} can be stated as follows. For every $\vect
x\in[0,1]^n$, we have $\vect x\in\conv(\mathcal F)$ if and only if there exist
\begin{itemize}
\item a set $U$ with a collection $\mathcal L$ of subsets which contains the empty set and is closed under taking complements and finite unions, and
\item a function $\mu:\mathcal L\to\reals$ with $\mu(U)=1$ and $\mu(L_1\cup\dotsb\cup
  L_k)=\mu(L_1)+\dotsb+\mu(L_k)$ for any finite collection of pairwise disjoint elements
  $L_1,\dotsc,L_k\in\mathcal L$, and
\item sets $X_1,\dotsc,X_n\in \mathcal L$
with $\mu(X_i)=x_i$ for all $i\in[n]$ and $\mu(F(X_1,\dots,X_n))=1$ (where the complement has to be
taken in $U$ instead of $\{0,1\}^n$).
\end{itemize}
To be precise, Zuckerberg states only one direction of this equivalence, but the other one is easy
(see the proof of Theorem~\ref{thm:zucker} below).
\begin{example}\label{ex:zucker}
For
$\mathcal F=\{(0,0,0),\,(0,1,0),\,(1,0,0),\,(1,1,1)\}$ we take $U$ to be the half-open
interval $[0,1)$ with $\mathcal L$ being the collection of unions of finitely many half-open
intervals and $\mu$ the Lebesgue measure (restricted to $\mathcal L$), that is,
  \begin{align}
    \mathcal L &= \left\{[a_1,b_1)\cup\dotsb\cup[a_k,b_k)\ :\ 0\leq a_1<b_1<a_2<b_2<\dotsb<a_{k}<b_k\leq 1,\
        k\in\nats\right\},\label{eq:intervals}\\
    \mu(X) &= (b_1-a_1)+\dotsb+(b_k-a_k)\qquad\qquad \text{for }X=[a_1,b_1)\cup\dotsb\cup[a_k,b_k)\in\mathcal L.\label{eq:measure}
  \end{align}
For $(x_1,x_2,x_3)\in\conv(\mathcal F)$ we set
\begin{align*}
 X_1&=[0,\,x_1), & X_2 &= [x_1-x_3,\,x_1+x_2-x_3), & X_3 &= [x_1-x_3,\,x_1). 
\end{align*}
Then $\mu(X_i)=x_i$ for all $i\in\{1,2,3\}$, and from
\begin{multline*}
F(X_1,X_2,X_3)= \left(X_1\cap X_2\cap X_3\right)\cup\left(\overline{X_1\cup X_2\cup
    X_3}\right)\cup\left(X_1\cap\overline{X_2\cup X_3}\right)
\cup\left(X_2\cap\overline{X_1\cup X_3}\right)\\
=[x_1-x_3,x_1)\cup[x_1+x_2-x_3,1)\cup[0,x_1-x_3)\cup[x_1,x_1+x_2-x_3)=[0,1) 
\end{multline*}
we get $\mu\left(F(X_1,X_2,X_3)\right)=1$, as required. This provides a proof that the McCormick
inequalities indeed characterize the convex hull of the set
$\{(x_1,\,x_2,\,x_1x_2)\,:\,x_1,x_2\in[0,1]\}$. 
\begin{figure}[htb]
  \centering
    \begin{tikzpicture}[xscale=8,yscale=1]
\draw[thick] (0,0) -- (1,0);
\draw[thick] (0,.1) -- (0,.-.1);
\draw (.1,.05) -- (.1,-.05);
\draw (.2,.05) -- (.2,-.05);
\draw (.3,.05) -- (.3,-.05);
\draw (.4,.05) -- (.4,-.05);
\draw[thick] (.5,.1) -- (.5,.-.1);
\draw (.6,.05) -- (.6,-.05);
\draw (.7,.05) -- (.7,-.05);
\draw (.8,.05) -- (.8,-.05);
\draw (.9,.05) -- (.9,-.05);
\draw[thick] (1,.1) -- (1,.-.1);
\node at (0,-.4) {$0$};
\node at (1,-.4) {$1$};
\node at (.5,-.4) {$0.5$};
 \draw[fill=lightgray] (0,.2) rectangle (.5,.6);
 \draw[fill=lightgray] (.4,.7) rectangle (.8,1.1);
 \draw[fill=lightgray] (.4,1.2) rectangle (.5,1.6); 
 \node at (-.1,.4) {$X_1$};
 \node at (-.1,.9) {$X_2$};
 \node at (-.1,1.4) {$X_3$}; 
  \end{tikzpicture}  
  \caption{The sets $X_1$, $X_2$ and $X_3$ for $x_1=0.5$, $x_2=0.4$ and $x_3=0.1$ with
    $X_1\cap X_2\cap X_3=[0.4,0.5)$, $\overline{X_1\cup X_2\cup X_3}=[0.8,1)$,
    $X_1\cap\overline{X_2\cup X_3}=[0,0.4)$ and $X_2\cap\overline{X_1\cup X_3}=[0.5,0.8)$\,. }
  \label{fig:example_zucker}
\end{figure}
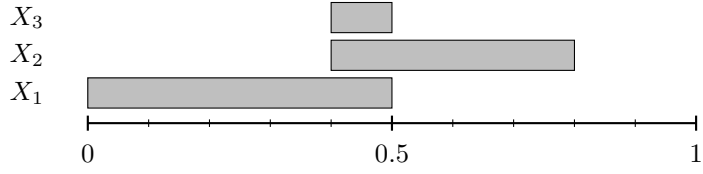
The construction is illustrated in Figure~\ref{fig:example_zucker} for $\vect x=(0.5,0.4,0.1)$. The
sets $X_1$, $X_2$ and $X_3$ do not only provide a certificate that $\vect x\in\conv\mathcal F$, but
they also encode a representation of $\vect x$ as a convex combination of the elements of
$\mathcal F$. To see this we associate with each $t\in[0,1)$ the vector $\vect
x(t)=(x_1(t),\,x_2(t),\,x_3(t))$ with $x_i(t)=1$ if $t\in X_i$ and $x_i(t)=0$ if $t\not\in X_i$. In our
example
\[\vect x(t)=
  \begin{cases}
    (1,\,0,\,0) &\text{for }t\in X_1\cap\overline{X_2\cup X_3}=[0,0.4),\\
    (1,\,1,\,1) &\text{for }t\in X_1\cap X_2\cap X_3=[0.4,0.5),\\
    (0,\,1,\,0) &\text{for }t\in X_2\cap\overline{X_1\cup X_3}=[0.5,0.8),\\
    (0,\,0,\,0) &\text{for }t\in \overline{X_1\cup X_2\cup X_3}=[0.8,1),
  \end{cases}
\]
which corresponds to the convex representation
\begin{equation}\label{eq:example_convex_comb}
  \begin{pmatrix}
    0.5\\ 0.4\\ 0.1
  \end{pmatrix} = 0.4
  \begin{pmatrix}
    1\\ 0\\ 0
  \end{pmatrix} + 0.1
  \begin{pmatrix}
    1\\ 1\\ 1
  \end{pmatrix} + 0.3
  \begin{pmatrix}
    0\\ 1\\ 0
  \end{pmatrix} +
  0.2
  \begin{pmatrix}
    0 \\ 0 \\ 0
  \end{pmatrix}.  
\end{equation}
\end{example}

Our main simplification of Zuckerberg's result is that the statement remains true if $U$ and
$\mathcal L$ are fixed as in Example~\ref{ex:zucker}. As a consequence, the condition
$\mu(F(X_1,\dots,X_n))=1$ can be replaced by $F(X_1,\dots,X_n)=U$, and in fact, the set theoretic
representation of $\mathcal F$ can be avoided completely using the standard identification of the
elements of $\{0,1\}^n$ with subsets of $[n]$.  More precisely, a vector $\vect\xi\in\{0,1\}^n$ is
identified with the set $\{i\in[n]\,:\,\xi_i=1\}$, so that in particular the elements of
$\mathcal F$ are identified with subsets of $[n]$. The following theorem is a reformulation of
\citep[Theorem 7]{Zuckerberg16} which is more convenient for our purpose. We provide a complete
proof in our setting because the proof is short and it would be a bit cumbersome to explain in
detail how our variant can be obtained from the arguments in~\cite{Zuckerberg16}. Following the proof of the theorem
we explain in detail how Zuckerberg's original statement can be obtained as a consequence of our
theorem (see Remark~\ref{rem:orig_zucker}).
\begin{theorem}\label{thm:zucker}
  Let $\mathcal F\subseteq\{0,1\}^n$, $\vect x\in[0,1]^n$, and let $\mathcal L$ and
  $\mu$ be defined by~\eqref{eq:intervals} and~\eqref{eq:measure}. Then
  $\vect x\in\conv(\mathcal F)$ if and only if there are sets $X_1,\dotsc,X_n\in\mathcal L$ such
  that $\mu(X_i)=x_i$ for all $i\in[n]$, and $\{i\in[n]\,:\,t\in X_i\}\in\mathcal F$
  for every $t\in[0,1)$.
\end{theorem}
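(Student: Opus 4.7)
The plan is to prove both implications directly by exploiting the fact that elements of $\mathcal L$ are finite unions of half-open intervals, so any finite collection $X_1,\dotsc,X_n\in\mathcal L$ admits a common refinement: collecting all endpoints of the constituent intervals yields a partition of $[0,1)$ into finitely many half-open intervals on which the membership pattern $t\mapsto\{i\,:\,t\in X_i\}$ is constant. This reduces the problem to a purely combinatorial correspondence between convex combinations and piecewise-constant assignments.

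For the forward direction, I assume $\vect x=\sum_{k=1}^{K}\lambda_k\vect\xi^k$ with $\vect\xi^k\in\mathcal F$, $\lambda_k\geq 0$, and $\sum_k\lambda_k=1$. Setting $s_0=0$ and $s_k=\lambda_1+\dotsb+\lambda_k$, I partition $[0,1)$ into half-open intervals $I_k=[s_{k-1},s_k)$ of length $\lambda_k$, and define
\[
X_i \;=\; \bigcup_{k\,:\,\xi^k_i=1} I_k \;\in\;\mathcal L.
\]
Then $\mu(X_i)=\sum_{k\,:\,\xi^k_i=1}\lambda_k=x_i$, and for every $t\in I_k$ we have $\{i\,:\,t\in X_i\}=\{i\,:\,\xi^k_i=1\}$, which (under the standard identification of subsets of $[n]$ with elements of $\{0,1\}^n$) is the element $\vect\xi^k\in\mathcal F$.

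For the reverse direction, suppose $X_1,\dotsc,X_n\in\mathcal L$ satisfy $\mu(X_i)=x_i$ and $\{i\,:\,t\in X_i\}\in\mathcal F$ for every $t\in[0,1)$. Let $0=t_0<t_1<\dotsb<t_K=1$ enumerate the union of all endpoints of the half-open intervals appearing in the representations of $X_1,\dotsc,X_n$, and set $J_k=[t_{k-1},t_k)$ with $\lambda_k=\mu(J_k)=t_k-t_{k-1}$. Each $X_i$ is a union of some of the $J_k$, so the assignment $t\mapsto\{i\,:\,t\in X_i\}$ is constant on each $J_k$; call this constant value $\vect\xi^k\in\mathcal F$. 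Then $\sum_k\lambda_k=1$ and
\[
x_i \;=\; \mu(X_i) \;=\; \sum_{k\,:\,\xi^k_i=1}\lambda_k \;=\; \sum_{k=1}^{K}\lambda_k\,\xi^k_i,
\]
so $\vect x=\sum_k\lambda_k\vect\xi^k\in\conv(\mathcal F)$.

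I do not anticipate any serious obstacle: the half-open intervals are chosen precisely so that finite unions are closed under complement within $[0,1)$ and the common-refinement argument is clean with no boundary issues. The one point requiring care is the notational identification between $\{0,1\}^n$ and the power set of $[n]$, which is what lets us avoid Zuckerberg's abstract set-theoretic representation $F(\{A_i\})$; once this identification is fixed, the equivalence of $\mu(F(\{S_i\}))=1$ with the pointwise condition $\{i\,:\,t\in X_i\}\in\mathcal F$ is immediate from the piecewise-constant structure above.
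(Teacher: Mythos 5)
Your proof is correct and follows essentially the same route as the paper: the forward direction builds the sets $X_i$ as unions of consecutive intervals of lengths $\lambda_k$ exactly as in the paper, and your reverse direction (common refinement of endpoints, grouping intervals by their constant membership pattern) is just a more explicit way of defining the paper's weights $\lambda_k=\mu\bigl(\{t : \{i : t\in X_i\}=\vect\xi^k\}\bigr)$. The extra detail you supply on why the refinement makes the pattern piecewise constant is a welcome elaboration but not a different argument.
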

\begin{proof}
  Let us fix an ordering $\vect\xi^1,\dotsc,\vect\xi^{\lvert\mathcal F\rvert}$ of $\mathcal F$ (In
  Example~\ref{ex:zucker}, the ordering would be the one in which the elements of $\mathcal F$
  appear in the convex combination~\eqref{eq:example_convex_comb}), and
  suppose $\vect x\in\conv(\mathcal F)$, say
  $\vect x=\lambda_1\vect\xi^1+\dotsb+\lambda_{\lvert\mathcal F\rvert}\vect\xi^{\lvert\mathcal
    F\rvert}$ with $\lambda_1+\dots+\lambda_{\lvert\mathcal F\rvert}=1$ and $\lambda_k\geq 0$ for all
  $k\in[\lvert\mathcal F\rvert]$. We define a partition $U=I_1\cup\dotsb\cup I_{\lvert\mathcal F\rvert}$ by setting
  $I_1=[0,\lambda_1)$ and $I_k=\left[\lambda_1+\dotsb+\lambda_{k-1},\,\lambda_1+\dotsb+\lambda_{k}\right)$
  for $k\in\{2,\dotsc,\lvert\mathcal F\rvert\}$. For
  \[X_i=\bigcup_{k\,:\,\xi^k_i=1}I_k,\]
  we have, for every $i\in[n]$,
  \[\mu(X_i)=\sum_{k\,:\,\xi^k_i=1}\mu(I_k)=\sum_{k\,:\,\xi^k_i=1}\lambda_k=\sum_{k=1}^{\lvert\mathcal
      F\rvert}\lambda_k\xi^k_i=x_i,\]
  and, for every $t\in[0,1)$, there is a unique index $k$ with $t\in I_k$, and then
  \[\{i\in[n]\,:\,t\in X_i\}=\{i\in[n]\,:\,\xi^k_i=1\}=\vect\xi^k\in\mathcal F,\]
  as required. Conversely, if $X_i$ are sets with the described properties,
  we can set
  \[\lambda_k=\mu\left(\left\{t\ :\ \{i\in[n]\ :\ t\in X_i\}=\vect\xi^k\right\}\right)\]
  for $k=1,\dotsc,\lvert\mathcal F\rvert$ to obtain the required convex representation $\vect
  x=\lambda_1\vect\xi^1+\dotsb+\lambda_{\lvert\mathcal F\rvert}\vect\xi^{\lvert\mathcal
    F\rvert}$. To see this note that by assumption, for every $t\in[0,1)$, there is a unique
  $k\in[\lvert\mathcal F\rvert]$ with $\vect\xi^k=\{i\in[n]\,:\,\xi^k_i=1\}$, and then for every $j\in[n]$,
  $t\in X_j$ if and only if $\xi^k_j=1$. In other words,
  \[X_j=\bigcup_{k\,:\,\xi^k_j=1}\left\{t\ :\ \{i\in[n]\ :\ t\in X_i\}=\vect\xi^k\right\},\]
  and using this we can verify that
  $\lambda_1\vect\xi^1+\dotsb+\lambda_{\lvert\mathcal F\rvert}\vect\xi^{\lvert\mathcal
    F\rvert}=\vect x$: for every $j\in[n]$,
  \begin{multline*}
    \sum_{k=1}^n\lambda_k\xi^k_j=\sum_{k=1}^n\mu\left(\left\{t\ :\ \{i\in[n]\,:\,t\in
        X_i\}=\vect\xi^k\right\}\right)\xi^k_j\\
    =\sum_{k\,:\,\xi^k_j=1}\mu\left(\left\{t\ :\ \{i\in[n]\,:\,t\in X_i\}=\vect\xi^k\right\}\right)
    =\mu\left(\bigcup_{k\,:\,\xi^k_j=1}\left\{t\ :\ \{i\in[n]\ :\ t\in
        X_i\}=\vect\xi^k\right\}\right)\\
    =\mu(X_j)=x_j.\qedhere
  \end{multline*}
\end{proof}
\begin{remark}\label{rem:orig_zucker}
  Theorem~\ref{thm:zucker} implies Zuckerberg's criterion since for any subsets
  $X_1,\dots,X_n\subseteq[0,1)$,
  \[F(X_1,\dots,X_n)=\{t\in[0,1)\,:\,\{i\in[n]\,:\,t\in X_i\}\in\mathcal F\}.\]
  This can be seen by induction on the structure of the formula $F$. The base case is $\mathcal
  F=A_j$ for some $j\in[n]$. Then
  \begin{multline*}
    F(X_1,\dots,X_n)=X_j=\{t\in[0,1)\,:\,j\in\{i\in[n]\,:\,t\in
    X_i\}\}\\
=\{t\in[0,1)\,:\,\{i\in[n]\,:\,t\in X_i\}\in A_j\},
  \end{multline*}
  as required. For the induction step we have either
  \begin{enumerate}
  \item $\mathcal F=F(A_1,\dots,A_n)=\overline{F_1(A_1,\dots,A_n)}$, or
  \item $\mathcal F=F(A_1,\dots,A_n)=F_1(A_1,\dots,A_n)\cup F_2(A_1,\dots,A_n)$, or 
  \item $\mathcal F=F(A_1,\dots,A_n)=F_1(A_1,\dots,A_n)\cap F_2(A_1,\dots,A_n)$,
  \end{enumerate}
 and in each case we can verify the statement.
  \begin{description}
  \item[Case 1] $\mathcal F=\overline{F_1(A_1,\dots,A_n)}$. By induction,
    \[F_1(X_1,\dots,X_n)=\{t\in[0,1)\,:\,\{i\in[n]\,:\,t\in X_i\}\in F_1(A_1,\dots,A_n)\},\]
    and then
    \begin{multline*}
      t\in F(X_1,\dots,X_n)\iff t\not\in F_1(X_1,\dots,X_n)\iff \{i\in[n]\,:\,t\in X_i\}\not\in
      F_1(A_1,\dots,A_n)\\
\iff      \{i\in[n]\,:\,t\in X_i\}\in
      \overline{F_1(A_1,\dots,A_n)}=\mathcal F.
    \end{multline*}
  \item[Case 2] $\mathcal F=F_1(A_1,\dots,A_n)\cup F_2(A_1,\dots,A_n)$. By induction,
    \[F_k(X_1,\dots,X_n)=\{t\in[0,1)\,:\,\{i\in[n]\,:\,t\in X_i\}\in F_k(A_1,\dots,A_n)\},\]
    for $k\in\{1,2\}$, and then
    \begin{multline*}
      t\in F(X_1,\dots,X_n)\iff t\in F_1(X_1,\dots,X_n)\cup F_2(X_1,\dots,X_n)\\
      \iff \{i\in[n]\,:\,t\in X_i\}\in F_1(A_1,\dots,A_n)\cup F_2(A_1,\dots,A_n)=\mathcal F.
    \end{multline*}
  \item[Case 3] $\mathcal F=F_1(A_1,\dots,A_n)\cap F_2(A_1,\dots,A_n)$. By induction,
    \[F_k(X_1,\dots,X_n)=\{t\in[0,1)\,:\,\{i\in[n]\,:\,t\in X_i\}\in F_k(A_1,\dots,A_n)\},\]
    for $k\in\{1,2\}$, and then
    \begin{multline*}
      t\in F(X_1,\dots,X_n)\iff t\in F_1(X_1,\dots,X_n)\cap F_2(X_1,\dots,X_n)\\
      \iff \{i\in[n]\,:\,t\in X_i\}\in F_1(A_1,\dots,A_n)\cap F_2(A_1,\dots,A_n)=\mathcal F.
    \end{multline*}
  \end{description}
\end{remark}

Next we extend the statement of Theorem~\ref{thm:zucker} so that it applies to the convex hull $X(\f)$ of the
graph of an arbitrary function $\f:\{0,1\}^n\to\reals$. For sets $X_1,\dotsc,X_n\in\mathcal L$ we
partition $U$ into $2^n$ subsets $R_{\vect \xi}(X_1,\dotsc,X_n)$, $\vect\xi\in\{0,1\}^n$, defined by
\[R_{\vect \xi}(X_1,\dotsc,X_n)=\{t\in[0,1)\,:\,\{i\in[n]\ :\ t\in X_i\}=\vect\xi\}.\]
Let us define two functions $\f_-,\f_+:[0,1]^n\to\reals$ as
\begin{align*}
  \f_-(\vect x) &= \min\left\{\sum_{\vect\xi\in\{0,1\}^n}\mu(R_\xi(X_1,\dotsc,X_n))
                 \f(\vect\xi)\,:\,X_i\in\mathcal L,\
                 \mu(X_i)=x_i\text{ for all }i\in[n] \right\},\\
  \f_+(\vect x) &= \max\left\{\sum_{\vect\xi\in\{0,1\}^n}\mu(R_\xi(X_1,\dotsc,X_n))
                 \f(\vect\xi)\,:\,X_i\in\mathcal L,\
                 \mu(X_i)=x_i\text{ for all }i\in[n] \right\}.
\end{align*}

\begin{theorem}\label{thm:zucker_function}
  For every function $\psi:\{0,1\}^n\to\reals$,
  \[X(\f)=\left\{(\vect x,z)\in[0,1]^n\times\reals :\ \f_-(\vect x)\leqslant z\leqslant \f_+(\vect
      x)\right\}.\]  
\end{theorem}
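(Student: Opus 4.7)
The plan is to reduce Theorem~\ref{thm:zucker_function} to Theorem~\ref{thm:zucker} by reinterpreting $\f_-$ and $\f_+$ as the extreme values of a finite-dimensional linear program over the convex representations of $\vect x$ by cube vertices. Given any choice of $X_1,\dotsc,X_n\in\mathcal L$ with $\mu(X_i)=x_i$, the sets $R_{\vect\xi}(X_1,\dotsc,X_n)$ form a partition of $[0,1)$, so the numbers $\lambda_{\vect\xi}:=\mu(R_{\vect\xi}(X_1,\dotsc,X_n))$ are nonnegative and sum to $1$. Moreover, since $X_i=\bigcup_{\vect\xi\,:\,\xi_i=1}R_{\vect\xi}$, one gets $x_i=\sum_{\vect\xi\,:\,\xi_i=1}\lambda_{\vect\xi}$, i.e.\ $\sum_{\vect\xi}\lambda_{\vect\xi}\vect\xi=\vect x$. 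Hence every feasible tuple $(X_1,\dotsc,X_n)$ produces a point $\lambda$ of the polytope
\[
\Lambda(\vect x)=\Bigl\{\lambda\in\reals_{\geqslant 0}^{2^n}\,:\,\sum_{\vect\xi}\lambda_{\vect\xi}=1,\ \sum_{\vect\xi}\lambda_{\vect\xi}\vect\xi=\vect x\Bigr\},
\]
with objective value $\sum_{\vect\xi}\lambda_{\vect\xi}\f(\vect\xi)$.

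Next I would invoke Theorem~\ref{thm:zucker} in the opposite direction: its constructive proof supplies, for any $\lambda\in\Lambda(\vect x)$, sets $X_1,\dotsc,X_n\in\mathcal L$ with $\mu(R_{\vect\xi}(X_1,\dotsc,X_n))=\lambda_{\vect\xi}$ for all $\vect\xi$ (concatenate half-open intervals of lengths $\lambda_{\vect\xi}$ and define the $X_i$ accordingly). So the map $(X_1,\dotsc,X_n)\mapsto\lambda$ is surjective onto $\Lambda(\vect x)$, and
\[
\f_-(\vect x)=\min_{\lambda\in\Lambda(\vect x)}\sum_{\vect\xi}\lambda_{\vect\xi}\f(\vect\xi),\qquad \f_+(\vect x)=\max_{\lambda\in\Lambda(\vect x)}\sum_{\vect\xi}\lambda_{\vect\xi}\f(\vect\xi).
\]
Both extrema are attained because $\Lambda(\vect x)$ is a nonempty compact polytope (nonemptiness holds since $\vect x\in[0,1]^n$ is itself a convex combination of $\{0,1\}$-vectors).

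With this reformulation the theorem is immediate from the definition $X(\f)=\conv\{(\vect\xi,\f(\vect\xi)):\vect\xi\in\{0,1\}^n\}$. A pair $(\vect x,z)$ lies in $X(\f)$ exactly when there exists $\lambda\in\Lambda(\vect x)$ with $z=\sum_{\vect\xi}\lambda_{\vect\xi}\f(\vect\xi)$. Since $\Lambda(\vect x)$ is connected and $\lambda\mapsto\sum_{\vect\xi}\lambda_{\vect\xi}\f(\vect\xi)$ is linear, the set of admissible $z$ is the closed interval $[\f_-(\vect x),\f_+(\vect x)]$, which is exactly the claim.

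The only nonroutine ingredient is the surjectivity of $(X_1,\dotsc,X_n)\mapsto\lambda$ onto $\Lambda(\vect x)$, which is precisely Theorem~\ref{thm:zucker} applied with $\mathcal F=\{0,1\}^n$ (so that the condition $\{i:t\in X_i\}\in\mathcal F$ is vacuous). Everything else is the standard observation that a linear image of a nonempty compact convex set in $\reals$ is a closed interval, together with the definition of $X(\f)$ as a convex hull.
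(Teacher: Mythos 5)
Your proof is correct and follows essentially the same route as the paper: both rest on the observation that tuples $(X_1,\dotsc,X_n)$ with $\mu(X_i)=x_i$ correspond exactly (via $\lambda_{\vect\xi}=\mu(R_{\vect\xi})$ and the interval-concatenation construction from the proof of Theorem~\ref{thm:zucker}) to convex representations of $\vect x$ by cube vertices, with matching objective values. The only cosmetic difference is that you obtain the intermediate values of $z$ from connectedness of $\Lambda(\vect x)$ under a linear map, whereas the paper explicitly interpolates between the two optimizing tuples; these are the same argument.
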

\begin{proof}
First suppose $(\vect x,z)\in X(\f)$, say
\[(\vect x,\,z)=\sum_{k=1}^{2^n}\lambda_k\left(\vect\xi^k,\,\f\left(\vect\xi^k\right)\right),\]
where $\vect\xi^1,\dots,\vect\xi^{2^n}$ is a fixed ordering of $\{0,1\}^n$. The sets $X_i\in
\mathcal L$ with $\mu(X_i)=x_i$  are defined exactly as in the proof of
Theorem~\ref{thm:zucker}: For the partition $U=I_1\cup\dotsb\cup I_{2^n}$ with
  $I_1=[0,\lambda_1)$ and $I_k=\left[\lambda_1+\dotsb+\lambda_{k-1},\,\lambda_1+\dotsb+\lambda_{k}\right)$
  for $k\in\{2,\dotsc,2^n\}$, we set
  \[X_i=\bigcup_{k\,:\,\xi^k_i=1}I_k.\]
  For every $k\in[2^n]$, $R_{\vect\xi^k}(X_1,\dotsc,X_n)=I_k$,
  hence $\mu(R_{\vect\xi^k}(X_1,\dotsc,X_n))=\lambda_k$. With
\[z=\sum_{k=1}^{2^n}\lambda_k\psi\left(\vect\xi^k\right)=\sum_{k=1}^{2^n}\mu\left(R_{\vect\xi^k}(X_1,\dotsc,X_n)\right)\psi\left(\vect\xi^k\right)\]
it follows that $\f_-(\vect x)\leqslant z\leqslant \f_+(\vect x)$.

For the converse, suppose
  $\f_-(\vect x)\leqslant z\leqslant \f_+(\vect x)$, and let $(X_1,\dotsc,X_n)$ and
  $(X'_1,\dotsc,X'_n)$ be optimizers for the problems defining $\f_-(\vect x)$ and $\f_+(\vect x)$,
  respectively. We write $z=t\f_-(\vect x)+(1-t)\f_+(\vect x)$ for some $t\in[0,1]$, and set
  \[\lambda(\vect\xi)=t\mu( R_{\vect\xi}(X_1,\dotsc,X_n)) + (1-t)\mu( R_{\vect\xi}(X'_1,\dotsc,X'_n))\]
  for all $\vect\xi\in\{0,1\}^n$. This gives the required convex representation
  \[(\vect x,z)=\sum_{\vect\xi\in\{0,1\}^n}\lambda(\vect\xi)(\vect\xi,\f(\vect\xi)).\qedhere\]
\end{proof}

For the particular case that the function $\f$ has the form $\f(\vect x)=f(\vect x)=\sum_{ij\in E}a_{ij}x_ix_j$, Theorem~\ref{thm:zucker_function} yields the following interpretation for $\cav[f](\vect x)$ and $\vex[f](\vect x)$.

\begin{corollary}\label{cor:set_interpretation}
For the bilinear function $f(\vect x)$, we have
  \begin{align*}
    \vex[f](\vect x)&=\min\left\{\sum_{ij\in E}a_{ij}\mu(X_i\cap X_j)\ :\ X_i\in\mathcal L,\
                      \mu( X_i)=x_i\text{ for all }i\in[n]\right\},\\
    \cav[f](\vect x)&=\max\left\{\sum_{ij\in E}a_{ij}\mu(X_i\cap X_j)\ :\ X_i\in\mathcal L,\
                      \mu(X_i)=x_i\text{ for all }i\in[n]\right\}.
  \end{align*}
  In particular, for a polytope $P\subseteq\reals^{n(n+1)/2}$ with $P\subseteq X(f)$, we have
  $\pi[f](P)=X(f)$ if and only if for every $\vect x\in[0,1]^n$, there exist
  $X_1,\dots,X_n\in\mathcal L$ and $X'_1,\dots,X'_n\in\mathcal L$ with $\mu(X_i)=\mu(X_i')=x_i$ for
  all $i\in[n]$, and
\begin{align*}
  \sum_{ij\in E}a_{ij}\mu(X_i\cap X_j) &= \LB_P[f](\vect x), & \sum_{ij\in E}a_{ij}\mu(X'_i\cap X'_j) &= \UB_P[f](\vect x).
\end{align*}

\end{corollary}
\begin{proof}
  We observe that
  \begin{align*}
    \sum_{\vect\xi\in\{0,1\}^n}&\mu\left( R_{\vect\xi}(X_1,\dotsc,X_n)\right)
    f(\vect\xi) = \sum_{\vect\xi\in\{0,1\}^n}\mu\left( R_{\vect\xi}(X_1,\dotsc,X_n)\right)
                  \sum_{ij\in E}a_{ij}\xi_i\xi_j\\
                &=\sum_{ij\in E}a_{ij}\sum_{\vect\xi\in\{0,1\}^n}\mu\left(
                  R_\xi(X_1,\dotsc,X_n)\right)\xi_i\xi_j \\
                &=\sum_{ij\in E}a_{ij}\sum_{\vect\xi\in\{0,1\}^n\,:\,\xi_i=\xi_j=1}\mu\left( \{t\in[0,1)\ :\
                  \{k\in[n]\ :\ t\in X_k\}=\vect\xi\}\right)\\
                &=\sum_{ij\in E}a_{ij}\mu\left(\bigcup_{\vect\xi\in\{0,1\}^n\,:\,\xi_i=\xi_j=1} \{t\in[0,1)\ :\
                  \{k\in[n]\ :\ t\in X_k\}=\vect\xi\}\right)\\
                &=\sum_{ij\in E}a_{ij}\mu\left( X_i\cap X_j\right).\qedhere
  \end{align*}
\end{proof}
\begin{example}\label{ex:mccormick_again}
  We can use Corollary~\ref{cor:set_interpretation} to prove again that $X(f)$ for
  $f(x_1,x_2)=x_1x_2$ is given by the McCormick inequalities. From
  \[\mu(X_1\cap X_2)\leq\min\{\mu(X_1),\mu(X_2)\}=\min\{x_1,x_2\},\]
  it follows that $\cav[f](x)\leq\min\{x_1,x_2\}$, and with $X_1=[0,x_1)$, $X_2=[0,x_2)$ we see that
  this bound can be achieved for all $x_1,x_2\in[0,1]$, and therefore the concave envelope is given
  by $x_3\leq x_1$ and $x_3\leq x_2$. Similarly, from
  \[\mu(X_1\cap X_2)\geq\max\{0,\mu(X_1)+\mu(X_2)-1\}=\max\{0,x_1+x_2-1\},\]
  it follows that $\vex[f](x)\geq\min\{0,x_1+x_2-1\}$, and with $X_1=[0,x_1)$, $X_2=[1-x_2,1)$ we see that
  this bound can be achieved for all $x_1,x_2\in[0,1]$, and therefore the convex envelope is given
  by $x_3\geq 0$ and $x_3\geq x_1+x_2-1$.
\end{example}
This example illustrates the use of Corollary~\ref{cor:set_interpretation} to characterize
$X(f)$ in the simplest possible case, the function $f(x_1,x_2)=x_1x_2$. It contains the main
idea of the proofs of our main results presented in the next section. The difference is, that for
more complicated functions the choice of the sets $X_i$ is not obvious, and can depend on the
particular vector $\vect x$ for which we want to find $\vex[f](\vect x)$.
\begin{example}\label{ex:convex_combination}
  For $n=5$ consider the function
  \[f(\vect x)=\sum_{1\leq i<j\leq 5}x_ix_j,\] corresponding to the graph shown on the left in
  Figure~\ref{fig:convex_combination}, and the point $\vect
  x=(0.6,\,0.3,\,0.3,\,0.9,\,0.4)$. Corollary~\ref{cor:set_interpretation} can be used to verify
  that $\vex[f](\vect x)\leq 2$. For this purpose, consider the sets
  \begin{align*}
    X_1 &= [0,0.6), & X_2 &= [0.6,0.9), & X_3 &=[0,0.2)\cup[0.9,1),\\
    X_4 &= [0,0.1)\cup[0.2,1), & X_5 &= [0.1,0.5),
  \end{align*}
  illustrated on the right in Figure~\ref{fig:convex_combination}.
   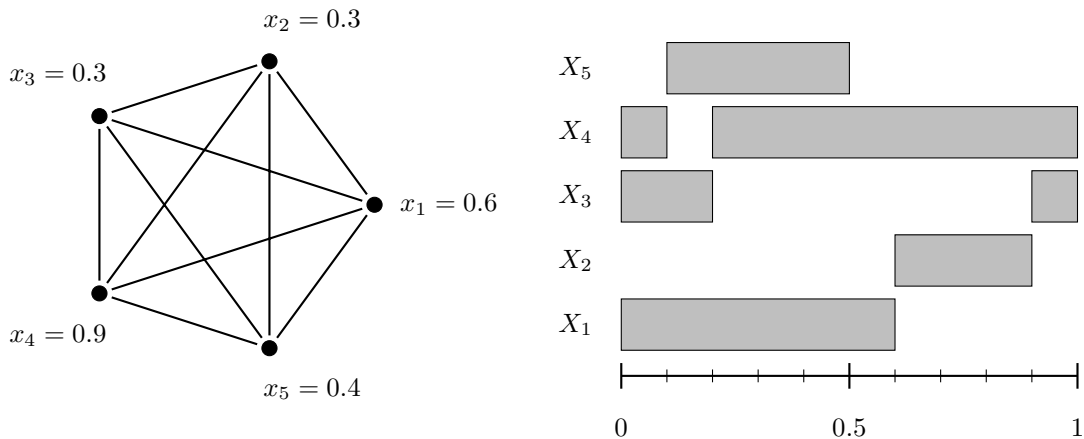
\begin{figure}[htb]
  \begin{minipage}[b]{.49\linewidth}
\centering
  \begin{tikzpicture}[scale=2,every node/.style={circle,fill=black,draw,inner sep=2pt,outer
      sep=2pt}]
\node[label={right:$x_1=0.6$}] (1) at (0:1) {};
\node[label={[label distance=-.2cm]72:$x_2=0.3$}] (2) at (72:1) {};
\node[label={[label distance=-.2cm]144:$x_3=0.3$}] (3) at (144:1) {};
\node[label={[label distance=-.2cm]-144:$x_4=0.9$}] (4) at (-144:1) {};
\node[label={[label distance=-.2cm]-72:$x_5=0.4$}] (5) at (-72:1) {};
\draw[thick] (1) -- (2) -- (3) -- (4) -- (5) -- (1); 
\draw[thick] (1) -- (3) -- (5) -- (2) -- (4) -- (1); 
  \end{tikzpicture}  
  \end{minipage}\hfill
    \begin{minipage}[b]{.49\linewidth}
\centering
  \begin{tikzpicture}[xscale=6,yscale=1.7]
\draw[thick] (0,0) -- (1,0);
\draw[thick] (0,.1) -- (0,.-.1);
\draw (.1,.05) -- (.1,-.05);
\draw (.2,.05) -- (.2,-.05);
\draw (.3,.05) -- (.3,-.05);
\draw (.4,.05) -- (.4,-.05);
\draw[thick] (.5,.1) -- (.5,.-.1);
\draw (.6,.05) -- (.6,-.05);
\draw (.7,.05) -- (.7,-.05);
\draw (.8,.05) -- (.8,-.05);
\draw (.9,.05) -- (.9,-.05);
\draw[thick] (1,.1) -- (1,.-.1);
\node at (0,-.4) {$0$};
\node at (1,-.4) {$1$};
\node at (.5,-.4) {$0.5$};
 \draw[fill=lightgray] (0,.2) rectangle (.6,.6);
 \draw[fill=lightgray] (.6,.7) rectangle (.9,1.1);
 \draw[fill=lightgray] (.9,1.2) rectangle (1,1.6);
 \draw[fill=lightgray] (0,1.2) rectangle (.2,1.6);
 \draw[fill=lightgray] (.2,1.7) rectangle (1,2.1);
 \draw[fill=lightgray] (0,1.7) rectangle (.1,2.1);
 \draw[fill=lightgray] (.1,2.2) rectangle (.5,2.6);
 \node at (-.1,.4) {$X_1$};
 \node at (-.1,.9) {$X_2$};
 \node at (-.1,1.4) {$X_3$};
 \node at (-.1,1.9) {$X_4$};
 \node at (-.1,2.4) {$X_5$};
  \end{tikzpicture}  
\end{minipage}
\caption{Illustration of the sets $X_i$ in Example~\ref{ex:convex_combination}.}
  \label{fig:convex_combination}
\end{figure}
  There are six vectors $\vect\xi$
  with non-empty $R_{\vect\xi}(X_1,\dots,X_5)$:
  \begin{align*}
    R_{(1,0,1,1,0)}(X_1,\dots,X_5) &= [0,0.1),\\
    R_{(1,0,1,0,1)}(X_1,\dots,X_5) &= [0.1,0.2),\\
    R_{(1,0,0,1,1)}(X_1,\dots,X_5) &= [0.2,0.5),\\
    R_{(1,0,0,1,0)}(X_1,\dots,X_5) &= [0.5,0.6),\\
    R_{(0,1,0,1,0)}(X_1,\dots,X_5) &= [0.6,0.9),\\
    R_{(0,0,1,1,0)}(X_1,\dots,X_5) &= [0.9,0.1).
  \end{align*}
So these sets $X_1,\dots,X_5$ correspond to the representation
\[
  \begin{pmatrix}
    0.6\\0.3\\0.3\\0.9\\0.4
  \end{pmatrix} = 0.1
 \begin{pmatrix}
    1\\0\\1\\1\\0
  \end{pmatrix} +0.1
 \begin{pmatrix}
    1\\0\\1\\0\\1
  \end{pmatrix} +0.3
 \begin{pmatrix}
    1\\0\\0\\1\\1
  \end{pmatrix} +0.1
 \begin{pmatrix}
    1\\0\\0\\1\\0
  \end{pmatrix} +0.3
 \begin{pmatrix}
    0\\1\\0\\1\\0
  \end{pmatrix} +0.1
 \begin{pmatrix}
    0\\0\\1\\1\\0
  \end{pmatrix},
\]
and extending this to the function value coordinate, we get
\begin{multline*}
  \vex[f](\vect x)\leqslant 0.1f(1,0,1,1,0)+0.1f(1,0,1,0,1)+0.3f(1,0,0,1,1)+0.1f(1,0,0,1,0)\\
  +0.3f(0,1,0,1,0)+0.1f(0,0,1,1,0)=0.3+0.3+0.9+0.1+0.3+0.1=2.
\end{multline*}
\end{example}
The following result, which was proved in~\cite{Luedtke12} (see also~\cite{Tawarmalani12}), is an
immediate consequence of Corollary~\ref{cor:set_interpretation}.
\begin{corollary}\label{cor:cav}
  If all edges are positive, then
  $\displaystyle\cav[f](\vec{x}) = \sum_{ij \in E} a_{ij} \min \{ x_i, x_j \}$.
\end{corollary}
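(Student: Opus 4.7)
The plan is to apply Corollary~\ref{cor:set_interpretation} directly and prove the two inequalities. Since all $a_{ij}>0$, we want to show that the maximum of $\sum_{ij\in E}a_{ij}\mu(X_i\cap X_j)$ over tuples $(X_1,\dotsc,X_n)\in\mathcal{L}^n$ with $\mu(X_i)=x_i$ equals $\sum_{ij\in E}a_{ij}\min\{x_i,x_j\}$.

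For the upper bound, the key observation is that for any measurable sets $X_i, X_j\subseteq [0,1)$ with $\mu(X_i)=x_i$ and $\mu(X_j)=x_j$, monotonicity of $\mu$ gives
\[
\mu(X_i\cap X_j)\leqslant \min\{\mu(X_i),\mu(X_j)\}=\min\{x_i,x_j\}.
\]
Multiplying by the positive coefficients $a_{ij}$ and summing over $ij\in E$ yields the upper bound $\cav[f](\vect x)\leqslant \sum_{ij\in E}a_{ij}\min\{x_i,x_j\}$.

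For the matching lower bound, I would exhibit a nested family of sets that makes every pairwise intersection as large as possible simultaneously. Specifically, set $X_i=[0,x_i)$ for each $i\in[n]$, which lies in $\mathcal L$ and satisfies $\mu(X_i)=x_i$. Then $X_i\cap X_j=[0,\min\{x_i,x_j\})$, so $\mu(X_i\cap X_j)=\min\{x_i,x_j\}$ for all $i,j$. Substituting into the expression in Corollary~\ref{cor:set_interpretation} shows the max is attained and equals $\sum_{ij\in E}a_{ij}\min\{x_i,x_j\}$, completing the proof.

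There is no real obstacle here: the nested choice $X_i=[0,x_i)$ works precisely because all coefficients are of the same sign, so a single family of sets can simultaneously maximize every pairwise intersection measure. The argument would break in the presence of negative edges, where minimizing some intersections conflicts with maximizing others, which is exactly why the analogous formula for $\vex[f]$ is not so immediate.
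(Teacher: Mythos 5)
Your proof is correct and matches the paper's argument: the paper derives this corollary as an immediate consequence of Corollary~\ref{cor:set_interpretation} by taking $X_i=[0,x_i)$ for all $i$, exactly your nested construction, with the upper bound $\mu(X_i\cap X_j)\leqslant\min\{x_i,x_j\}$ being implicit there. You have simply spelled out both directions explicitly, which is a faithful elaboration rather than a different route.
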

\begin{proof}
  With $X_i=[0,x_i)$ for all $i\in[n]$, we get
  \[\cav[f](\vect x)\geq\sum_{ij\in E}a_{ij}\mu(X_i\cap X_j)=\sum_{ij\in
      E}a_{ij}\mu([0,\min\{x_i,x_j\})=\sum_{ij\in E}a_{ij}\min\{x_i,x_j\}.\]
  On the other hand, for any feasible choice of the sets $X_i$, we have, for all $i,j\in[n]$,
  \[\mu(X_i\cap X_j)\leq\min\{\mu(X_i),\,\mu(X_j)\}=\min\{x_i,x_j\}.\]
  With the assumption that $a_{ij}>0$ for all $ij\in E$, this implies
  \[\cav[f](\vect x)\leq \sum_{ij\in E}a_{ij}\min\{x_i,x_j\}.\qedhere\]
\end{proof}

As another consequence, we can combine convex hull characterizations of graphs of two bilinear
functions if they share at most one variable.
\begin{corollary}\label{cor:combination}
Let $f\colon[0,1]^k\to\reals$ and $g\colon[0,1]^{n-k+1}\to\reals$ be two bilinear functions given as
\[
f(\vect x) =\sum_{1\leq i<j\leq k}a_{ij}x_ix_j, \qquad g(\vect x) =\sum_{k\leq i<j\leq n}a_{ij}x_ix_j,
\] 
so that $f$ depends only on variables
  $x_1,\dotsc,x_k$, and $g$ depends only on variables $x_k,\dotsc,x_n$. Let
  $P,Q\subseteq[0,1]^{n(n+1)/2}$ be polytopes with $\pi[f](P)=X(f)$ and $\pi[g](Q)=X(g)$, such that
  $P$ is described by inequalities involving only the variables $x_1,\dots,x_k$ and $y_{ij}$
  with $1\leq i<j\leq k$, and $Q$ is described by inequalities involving only the variables $x_k,\dots,x_n$ and $y_{ij}$
  with $k\leq i<j\leq k$. Then $\pi[f+g](P\cap Q)=\X(f+g)$.
\end{corollary}
\begin{proof}
Fix $\vect x\in[0,1]^n$. By assumption and Corollary~\ref{cor:set_interpretation}, there are sets $X_1,\dots,X_n\in\mathcal L$ with
  $\mu(X_i)=x_i$ for all $i\in[k]$, and sets $X'_k,\dots,X'_n\in\mathcal L$ with $\mu(X'_i)=x_i$ for
  all $i\in[k,n]$, such that
\[
    \sum_{1\leq i<j\leq k}a_{ij}\mu(X_i\cap X_j) \eq \LB_P[f](\vect x), \quad \sum_{k\leq i<j\leq n}a_{ij}\mu(X'_i\cap X'_j) \eq \LB_Q[g](\vect x).
\]
  Applying a measure preserving bijection $[0,1)\to[0,1)$ that maps $\mathcal L$ to $\mathcal L$ and
  $X'_k$ to $X_k$, we can assume that $X_k=X'_k$, and then the sets
  $X_1,\dots,X_{k-1},X_k=X'_k,X'_{k+1},\dots,X'_n$ provide a certificate for $\vex[f+g](\vect
  x)=\LB_{P\cap Q}[f+g](\vect
  x)$:
  \[\sum_{1\leq i<j\leq k}a_{ij}\mu(X_i\cap X_j)+\sum_{k\leq i<j\leq n}a_{ij}\mu(X'_i\cap
    X'_j)=\LB_P[f](\vect x)+\LB_Q[g](\vect x)=\LB_{P\cap Q}[f+g](\vect x).\]
  The same argument works for $\cav[f+g](\vect x)=\UB_{P\cap Q}[f+g](\vect x)$.
  \end{proof}
  

\subsection{Alternative proof for cliques}\label{sec:cliques}
In order to illustrate the utility of the geometric characterization of $0$--$1$ polytopes in a simpler
setting than what we have for our main results, we start with an alternative proof for the following
result that was proved in~\cite{Rikun97,Sherali97a}.

\begin{theorem}\label{res:clique}
  If $G = K_n$, and all edge weights are equal to $1$, then $\X(f) = \pi[f](\P)$ where
  $\P\subseteq[0,1]^{n(n+1)/2}$ is the polytope described by the inequalities $y_{ij}\leq x_i$ and
  $y_{ij}\leq x_j$ for all $ij\in E$, together with
  \begin{equation}\label{eq:clique2}
  s x(V) - y(E) \leq s (s + 1)/2,\ s = 1, \dotsc, n-1.
  \end{equation}
\end{theorem}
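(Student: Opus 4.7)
The plan is to invoke the equivalence~\eqref{eq:enviff} and reduce the theorem to the two envelope identities $\LB_{\P}[f] \equiv \vex[f]$ and $\UB_{\P}[f] \equiv \cav[f]$ on $[0,1]^n$. Since each clique inequality~\eqref{eq:clique2} is valid for $\QP(K_n)$, we have $\P \supseteq \QP(K_n)$, which already yields the easy inequalities $\UB_{\P}[f] \geq \cav[f]$ and $\LB_{\P}[f] \leq \vex[f]$; only the reverse inequalities require work. For the concave envelope the reverse inequality is immediate: the McCormick bounds $y_{ij} \leq x_i$ and $y_{ij} \leq x_j$ give $y(E) \leq \sum_{ij}\min\{x_i,x_j\}$, and Corollary~\ref{cor:cav} (applicable here because every weight is positive) identifies the right-hand side with $\cav[f](\vec{x})$.

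For the convex envelope I would first extract a uniform lower bound on $y(E)$ over $\P$. Summing the McCormick inequalities $y_{ij} \geq x_i + x_j - 1$ over all edges of $K_n$ yields $y(E) \geq (n-1)\,x(V) - \binom{n}{2}$, which is precisely the ``missing'' $s = n-1$ case of the clique family. Together with~\eqref{eq:clique2} for $s = 1,\ldots,n-2$ and with $y \geq 0$ (the trivial $s = 0$ case), this gives, for every $(\vec{x},\vec{y}) \in \P$,
\[
  y(E) \;\geq\; \max_{s \in \{0,1,\ldots,n-1\}} \Bigl( s\, x(V) - \tfrac{s(s+1)}{2} \Bigr).
\]

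It then suffices to produce, for each $\vec{x} \in [0,1]^n$, a witness showing $\vex[f](\vec{x})$ lies at or below this same maximum, and here the geometric method enters: by Corollary~\ref{cor:set_interpretation}, any choice of sets $X_1, \dotsc, X_n \in \mathcal{L}$ with $\mu(X_i) = x_i$ certifies $\vex[f](\vec{x}) \leq \sum_{ij} \mu(X_i \cap X_j)$. I would construct the $X_i$ by cumulative wrap-around: set $T_0 = 0$, $T_i = x_1 + \cdots + x_i$, and let $X_i$ be the image of $[T_{i-1},T_i)$ in $[0,1)$ modulo $1$. Each $X_i$ is a union of at most two half-open intervals, hence belongs to $\mathcal{L}$, and writing $c = x(V)$ and $s = \lfloor c \rfloor$, the covering function $k(t) := \lvert\{i : t \in X_i\}\rvert$ takes the value $s+1$ on a set of measure $c-s$ and the value $s$ elsewhere. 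Since $\sum_{ij}\mathbf{1}[t \in X_i \cap X_j] = \binom{k(t)}{2}$, a direct computation yields
\[
  \sum_{ij \in E} \mu(X_i \cap X_j) \;=\; (c-s)\binom{s+1}{2} + (s+1-c)\binom{s}{2} \;=\; s\,c - \tfrac{s(s+1)}{2},
\]
which matches the lower bound on $y(E)$ and closes the argument.

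The main obstacle I anticipate is the structural verification that the wrap-around construction produces a covering function $k(\cdot)$ taking only the two consecutive integer values $\lfloor c \rfloor$ and $\lceil c \rceil$; this requires carefully tracking how the endpoints $T_i \bmod 1$ interact as $i$ ranges over $[n]$. Once this is in place the arithmetic is routine, and the same calculation in fact certifies the stronger identity $\vex[f](\vec{x}) = \max_s\!\bigl(s\,x(V) - \tfrac{s(s+1)}{2}\bigr)$, explaining transparently why exactly the clique inequalities for $s = 1,\ldots,n-2$ (together with the $s = 0$ and $s = n-1$ cases delivered by McCormick) are needed.
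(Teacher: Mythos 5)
Your proposal is correct and follows essentially the same route as the paper: the concave envelope is dispatched by McCormick plus Corollary~\ref{cor:cav}, and the convex envelope by the cumulative wrap-around construction of the sets $X_i$ fed into Corollary~\ref{cor:set_interpretation}, matched against the clique inequalities for $s=1,\dotsc,n-2$ and the McCormick-derived bounds for $s\in\{0,n-1,n\}$ (the paper's Lemma~\ref{lem:vex_clique} organizes this as a case analysis on $s=\lfloor x(V)\rfloor$ rather than a single maximum, but the content is identical). The two-level structure of the covering function that you flag as the main obstacle is exactly the property the paper asserts for the same construction, so no gap remains.
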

To be precise, the result from~\cite{Rikun97,Sherali97a} is just one half of Theorem~\ref{res:clique}: in our
notation it says that
\[\vex[f](\vect x)=\max\left\{0,\,\max\left\{s x(V)-s(s+1)/2\,:\,s=1,2,\dots,n-1\right\}\right\}.\]
The correspondence between this statement in the original space $\reals^{n+1}$ and our version in
the extended space $\reals^{n(n+1)/2}$ comes from the fact that the only constraints enforcing lower
bounds on the $y$-variables are~\eqref{eq:clique2}, which put lower bounds on $y(E)$ which is
precisely the term corresponding to $f(\vect x)$ when the products $x_ix_j$ are replaced by the
variables $y_{ij}$. The other half of Theorem~\ref{res:clique} is an immediate consequence of
Corollary~\ref{cor:cav}: In order to describe $\cav[f](\vect x)$ it is sufficient to require
$y_{ij}\leq\min\{x_i,\,x_j\}$ for all $ij\in E$. In view of Corollaries~\ref{cor:set_interpretation}
and~\ref{cor:cav}, Theorem~\ref{res:clique} is a consequence of the following lemma.
\begin{lemma}\label{lem:vex_clique} For every $\vect x\in[0,1]^n$,
  \[\LB_P[f](\vect x)=\vex[f](\vect x)=s(x_1+\dotsb+x_n)-\binom{s+1}{2},\]
  where $s=\lfloor x_1+\dotsb+x_n\rfloor$.   
\end{lemma}
\begin{proof}
Since $\vex[f](\vect x)\geqslant\LB_P[f](\vect x)$ it is sufficient to show that 
\begin{align}
  \vex[f](\vect x)&\leqslant s(x_1+\dotsb+x_n)-\binom{s+1}{2},\label{eq:first_half}\\
  \LB_P[f](\vect x)&\geqslant s(x_1+\dotsb+x_n)-\binom{s+1}{2}.\label{eq:second_half}  
\end{align}
In order to show~\eqref{eq:first_half} using Corollary~\ref{cor:set_interpretation} we concatenate
intervals of lengths $x_1,\dotsc,x_n$ and obtain sets $X_i\subseteq[0,1)$ by interpreting the result
modulo $\ints$. More formally, the sets $X_i$ are defined as follows: for $i=1$, put
$X_1=[0,x_1)$. Now let $i\geqslant 2$, suppose $X_{i-1}$ has been defined already, set
$b=\sup X_{i-1}$, and put
\[X_{i}=
\begin{cases}
  [b,b+x_i) & \text{if }b+x_i\leqslant 1,\\
  [b,1)\cup[0,x_i-(1-b)) &\text{if }b+x_i>1.
\end{cases}
\]
\begin{figure}[htb]
  \begin{minipage}[b]{.49\linewidth}
\centering
  \begin{tikzpicture}[scale=2,every node/.style={circle,fill=black,draw,inner sep=2pt,outer
      sep=2pt}]
\node[label={right:$x_1=0.6$}] (1) at (0:1) {};
\node[label={[label distance=-.2cm]72:$x_2=0.3$}] (2) at (72:1) {};
\node[label={[label distance=-.2cm]144:$x_3=0.3$}] (3) at (144:1) {};
\node[label={[label distance=-.2cm]-144:$x_4=0.9$}] (4) at (-144:1) {};
\node[label={[label distance=-.2cm]-72:$x_5=0.4$}] (5) at (-72:1) {};
\draw[thick] (1) -- (2) -- (3) -- (4) -- (5) -- (1); 
\draw[thick] (1) -- (3) -- (5) -- (2) -- (4) -- (1); 
  \end{tikzpicture}  
  \end{minipage}\hfill
    \begin{minipage}[b]{.49\linewidth}
\centering
  \begin{tikzpicture}[xscale=6,yscale=1.7]
\draw[thick] (0,0) -- (1,0);
\draw[thick] (0,.1) -- (0,.-.1);
\draw (.1,.05) -- (.1,-.05);
\draw (.2,.05) -- (.2,-.05);
\draw (.3,.05) -- (.3,-.05);
\draw (.4,.05) -- (.4,-.05);
\draw[thick] (.5,.1) -- (.5,.-.1);
\draw (.6,.05) -- (.6,-.05);
\draw (.7,.05) -- (.7,-.05);
\draw (.8,.05) -- (.8,-.05);
\draw (.9,.05) -- (.9,-.05);
\draw[thick] (1,.1) -- (1,.-.1);
\node at (0,-.4) {$0$};
\node at (1,-.4) {$1$};
\node at (.5,-.4) {$0.5$};
\draw[fill=lightgray] (0,.2) rectangle (.6,.6); 
\draw[fill=lightgray] (.61,.2) rectangle (.9,.6);
\draw[draw=none,fill=lightgray] (.91,.2) rectangle (1,.6);
\draw (1,.2) -- (.91,.2) -- (.91,.6) -- (1,.6);
\draw[draw=none,fill=lightgray] (0,.7) rectangle (.2,1.1);
\draw (0,1.1) -- (.2,1.1) -- (.2,.7) -- (0,.7);
\draw[draw=none,fill=lightgray] (.21,.7) rectangle (1,1.1);
\draw (1,1.1) -- (.21,1.1) -- (.21,.7) -- (1,.7);
\draw[draw=none,fill=lightgray] (0,1.2) rectangle (.1,1.6);
\draw (0,1.6) -- (.1,1.6) -- (.1,1.2) -- (0,1.2);
\draw[fill=lightgray] (.11,1.2) rectangle (.5,1.6);
\node at (.3,.4) {$X_1$};
\node at (.75,.4) {$X_2$};
\node at (.95,.4) {$X_3$};
\node at (.1,.9) {$X_3$};
\node at (.55,.9) {$X_4$};
\node at (.05,1.4) {$X_4$};
\node at (.3,1.4) {$X_5$};
\draw [decorate,decoration={brace,amplitude=10pt},xshift=0pt,yshift=0pt]
(0,1.7) -- (0.5,1.7) node [black,midway,yshift=.6cm] {$B$};
\draw [decorate,decoration={brace,amplitude=10pt},xshift=0pt,yshift=0pt]
(.5,1.7) -- (1,1.7) node [black,midway,yshift=.6cm] {$A$};
  \end{tikzpicture}  
\end{minipage}
\caption{Illustration of the construction of the sets $X_i$ in the proof of Lemma~\ref{lem:vex_clique}.}
  \label{fig:clique_sets}
\end{figure}
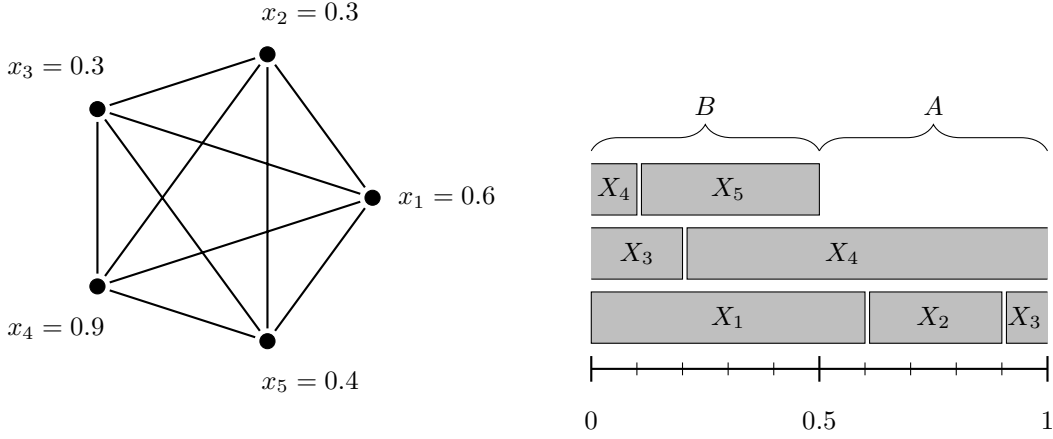
This construction is illustrated in Figure~\ref{fig:clique_sets} where $s=\lfloor 2.5\rfloor=2$. Now
$[0,1)=A\cup B$, where
\begin{align*}
  A&=\{t\in[0,1)\ :\ \lvert\{i\in[n]\ :\ t\in X_i\}\rvert=s\},& B&=\{t\in[0,1)\ :\ \lvert\{i\in[n]\ :\ t\in
  X_i\}\rvert=s+1\},
\end{align*}
and $\mu( B)=(x_1+\dotsb+x_n)-s$, $\mu( A)=s+1-(x_1+\dotsb+x_n)$. Therefore,
\begin{align*}
\sum_{1\leqslant i<j\leqslant n}\mu( X_i\cap X_j) &= \binom{s}{2}\mu(
A)+\binom{s+1}{2}\mu( B) \\
&= \binom{s}{2}\left[s+1-(x_1+\dotsb+x_n)\right]+\binom{s+1}{2}\left[(x_1+\dotsb+x_n)-s\right]\\
&= (x_1+\dotsb+x_n)\left[\binom{s+1}{2}-\binom{s}{2}\right]+(s+1)\binom{s}{2}-s\binom{s+1}{2}\\
&= s(x_1+\dotsb+x_n)-\binom{s+1}{2},
\end{align*}
and this implies~\eqref{eq:first_half}. If $s<n-1$ then~\eqref{eq:second_half} is obvious: for $s=0$
the right-hand side is zero, and for $1\leqslant s\leqslant n-1$,
\[\sum_{1\leqslant i<j\leqslant n}y_{ij}\geqslant s(x_1+\dotsb+x_n)-\binom{s+1}{2}\]
is one of the clique inequalities. Finally, if $s=n$ then
\[\sum_{1\leqslant i<j\leqslant n}y_{ij}=\binom{n}{2}=n^2-\binom{n+1}{2}=n(x_1+\dotsb+x_n)-\binom{n+1}{2}.\qedhere\]
\end{proof}

\section{Proofs of Main Results}
\label{sec:mainproofs}
In Sections~\ref{sec:clique_minus} and~\ref{sec:cycles} we will use
Corollary~\ref{cor:set_interpretation} to prove Theorems~\ref{thm:clique_minus}
and~\ref{thm:cycles}.

\subsection{Proof of Theorem~\ref{thm:clique_minus}}\label{sec:clique_minus}

We will establish this result by proving $\vex[f](\vect x)\leqslant \LB_P[f](\vect x)$. Without loss
of generality we assume $x_n\leqslant x_{n-1}$ and
$x_1\geqslant x_2\geqslant\dotsb\geqslant x_{n-2}$, and we proceed as follows. We start with
$X_n=[0,x_n)$ and $X_{n-1}=[0,x_{n-1})$ and construct the sets $X_1,\dotsc,X_{n-2}$ as described in
Algorithm~\ref{alg:construction}.
\begin{algorithm}
  \caption{Construction of the sets $X_i$ in the proof of
    Theorem~\ref{thm:clique_minus}}\label{alg:construction}
  \begin{tabbing}
    .....\=.....\=.....\=............................. \kill \\[-3ex]
    $(a,b)\leftarrow (x_n,x_{n-1})$\\
    \textbf{for} $k=1,2,\dotsc,n-2$ \textbf{do}\\
    \> \textbf{if} $x_k\leqslant 1-b$ \textbf{then}\\
    \> \> $X_k\leftarrow[b,b+x_k)$\\
    \> \> $b\leftarrow b+x_k$\\
    \> \textbf{else if} $x_k\leqslant 1-a$ \textbf{then}\\
    \> \> $X_k\leftarrow[b,1)\cup[a,a+x_k+b-1)$\\
    \> \> $(a,b)\leftarrow(a+x_k+b-1,1)$\\
    \> \textbf{else}\\
    \> \> $X_k\leftarrow[a,1)\cup[0,x_k+a-1)$\\
    \> \> $a\leftarrow x_k+a-1$
  \end{tabbing}
\end{algorithm}
\begin{example}
  For $n=6$ two different outcomes of Algorithm~\ref{alg:construction} are illustrated in
  Figure~\ref{fig:ex_2}: For $\vect x=(0.9,\,0.6,\,0.2,\,0.1,\,0.6,\,0.4)$ the algorithm terminates
  with $b=1$, while for $\vect x=(0.9,\,0.8,\,0.2,\,0.1,\,0.6,\,0.4)$, we have still $b<1$ in the
  end.
  \begin{figure}[htb]
    \centering
    \begin{minipage}[b]{.49\linewidth}
      \centering
      \begin{tikzpicture}[xscale=6,yscale=1.2]
\draw[thick] (0,0) -- (1,0);
\draw[thick] (0,.1) -- (0,.-.1);
\draw (.1,.05) -- (.1,-.05);
\draw (.2,.05) -- (.2,-.05);
\draw (.3,.05) -- (.3,-.05);
\draw (.4,.05) -- (.4,-.05);
\draw[thick] (.5,.1) -- (.5,.-.1);
\draw (.6,.05) -- (.6,-.05);
\draw (.7,.05) -- (.7,-.05);
\draw (.8,.05) -- (.8,-.05);
\draw (.9,.05) -- (.9,-.05);
\draw[thick] (1,.1) -- (1,.-.1);
\node at (0,-.3) {$0$};
\node at (1,-.3) {$1(=b)$};
\node at (.5,-.3) {$0.5$};
\node at (.8,-.3) {$a$};
\draw[dashed] (.8,0) -- (.8,2.2);
\draw[dashed] (1,0) -- (1,2.2);
 \draw[fill=lightgray] (0,.2) rectangle (.6,.6);
 \draw[fill=lightgray] (0,.7) rectangle (.4,1.1);
 \draw[fill=lightgray] (.61,.2) rectangle (1,.6);
 \draw[fill=lightgray] (.41,.7) rectangle (.6,1.1);
 \draw[fill=lightgray] (0,1.2) rectangle (.3,1.6);
 \draw[fill=lightgray] (.61,.7) rectangle (1,1.1);
 \draw[fill=lightgray] (.31,1.2) rectangle (.5,1.6);
 \draw[fill=lightgray] (.51,1.2) rectangle (.7,1.6);
 \draw[fill=lightgray] (.71,1.2) rectangle (.8,1.6);
 \node at (.3,.4) {$X_5$};
 \node at (.8,.4) {$X_1$};
 \node at (.2,.9) {$X_6$};
 \node at (.5,.9) {$X_1$};
 \node at (.8,.9) {$X_2$};
 \node at (.15,1.4) {$X_1$};
 \node at (.4,1.4) {$X_2$};
 \node at (.6,1.4) {$X_3$};
 \node at (.75,1.4) {$X_4$};
 \node at (.5,-.7) {$\vect x=(0.9,\,0.6,\,0.2,\,0.1,\,0.6,\,0.4)$};
\end{tikzpicture}
\end{minipage}\hfill
\begin{minipage}[b]{.49\linewidth}
  \centering
  \begin{tikzpicture}[xscale=6,yscale=1.2]
\draw[thick] (0,0) -- (1,0);
\draw[thick] (0,.1) -- (0,.-.1);
\draw (.1,.05) -- (.1,-.05);
\draw (.2,.05) -- (.2,-.05);
\draw (.3,.05) -- (.3,-.05);
\draw (.4,.05) -- (.4,-.05);
\draw[thick] (.5,.1) -- (.5,.-.1);
\draw (.6,.05) -- (.6,-.05);
\draw (.7,.05) -- (.7,-.05);
\draw (.8,.05) -- (.8,-.05);
\draw (.9,.05) -- (.9,-.05);
\draw[thick] (1,.1) -- (1,.-.1);
\node at (0,-.3) {$0$};
\node at (1,-.3) {$1$};
\node at (.5,-.3) {$0.5$};
\node at (.1,-.3) {$a$};
\node at (.9,-.3) {$b$};
 \draw[fill=lightgray] (0,.2) rectangle (.6,.6);
 \draw[fill=lightgray] (0,.7) rectangle (.4,1.1);
 \draw[fill=lightgray] (.61,.2) rectangle (1,.6);
 \draw[fill=lightgray] (.41,.7) rectangle (.6,1.1);
 \draw[fill=lightgray] (0,1.2) rectangle (.3,1.6);
 \draw[fill=lightgray] (.61,.7) rectangle (1,1.1);
 \draw[fill=lightgray] (.31,1.2) rectangle (.6,1.6);
 \draw[fill=lightgray] (.61,1.2) rectangle (.8,1.6);
 \draw[fill=lightgray] (.81,1.2) rectangle (.9,1.6);
 \draw[fill=lightgray] (0,1.7) rectangle (.1,2.1);
 \node at (.3,.4) {$X_5$};
 \node at (.8,.4) {$X_1$};
 \node at (.2,.9) {$X_6$};
 \node at (.5,.9) {$X_1$};
 \node at (.8,.9) {$X_2$};
 \node at (.15,1.4) {$X_1$};
 \node at (.45,1.4) {$X_2$};
 \node at (.7,1.4) {$X_3$};
 \node at (.85,1.4) {$X_4$};
 \node at (.05,1.9) {$X_2$};
 \node at (.5,-.7) {$\vect x=(0.9,\,0.8,\,0.2,\,0.1,\,0.6,\,0.4)$};
 \draw[dashed] (.1,0) -- (.1,2.2);
 \draw[dashed] (.9,0) -- (.9,2.2);
\end{tikzpicture}
\end{minipage}
\caption{The sets $X_i$ constructed by Algorithm~\ref{alg:construction} for two vectors
  $\vect x$.}\label{fig:ex_2}
\end{figure}
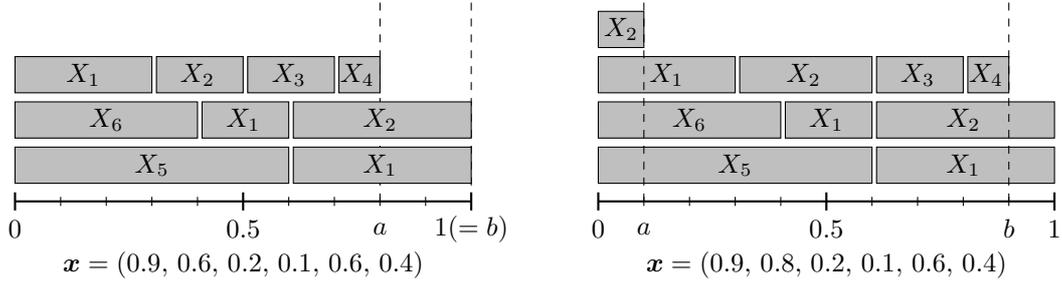
\end{example}
Setting $y_{ij}=\mu( X_i\cap X_j)$ for all $ij\in E$, Corollary~\ref{cor:set_interpretation}
implies $\vex[f](\vect x)\leqslant \sum_{ij\in E}y_{ij}$, and it is sufficient to show that
$\vect y$ is an optimal solution for the LP defining $\LB_P[f](\vect x)$:
\[\text{Minimize } \sum_{ij\in E}y_{ij} \text{ subject to the McCormick inequalities~\eqref{eq:mccormick} and \eqref{eq:clique_minus_1}--\eqref{eq:clique_minus_3}}.\]
Our argument will be based on expressing $\sum_{ij\in E}y_{ij}$ in terms of the variables $x_i$ and
then arguing the inequalities listed in Theorem~\ref{thm:clique_minus} imply that this expression is a
lower bound for $\LB_P[f](\vect x)$. In other words, we need to verify
$\sum_{ij\in E}y_{ij}\leq\sum_{ij\in E}y'_{ij}$ for every $\vect y'$ with $(\vect x,\vect y')\in
P$. The following observations turn out to be useful.
\begin{itemize}
  \item During the runtime of the algorithm the parameter $b$ is never decreasing.
\item After every step of Algorithm~\ref{alg:construction} there is an integer $s$ such that
  \[\lvert\{i\in[n]\,:\,t\in X_i\}\rvert=
    \begin{cases}
      s+2 & \text{for }t<a,\\
      s+1 & \text{for }a\leq t<b,\\
      s & \text{for }b\leq t<1
    \end{cases}
  \]
  if $b<1$, and
    \[\lvert\{i\in[n]\,:\,t\in X_i\}\rvert=
    \begin{cases}
      s+1 & \text{for }t<a,\\
      s & \text{for }a\leq t<1,\\
     \end{cases}
   \]
   if $b=1$.
   \item If $b=1$ at termination of the algorithm, and $s$ is the integer with $\lvert\{i\in[n]\,:\,t\in
      X_i\}\rvert=s$ for $t\in[a,1)$ then $a=x_1+x_2+\dotsb+x_n-s$.
    \item If $b<1$ at termination of the algorithm, and $s$ is the integer with $\lvert\{i\in[n]\,:\,t\in
      X_i\}\rvert=s$ for $t\in[b,1)$ then $a=x_1+x_2+\dotsb+x_s+x_n-s$ and
      $b=x_{s+1}+x_{s+2}+\dotsb+x_{n-1}$.    
\end{itemize}
\begin{description}
\item[Case 1] $b=1$. Then $1\leq s\leq n-1$. As in the proof of Lemma~\ref{lem:vex_clique},
  \[\sum_{ij\in E}y_{ij} = \sum_{1\leq i<j\leq n}y_{ij} -
    y_{n-1,n}=s(x_1+\dotsb+x_{n})-x_n-\binom{s+1}{2}.\]
  For $s\leq n-2$, we use~\eqref{eq:clique_minus_3}:
  \[s(x_1+\dots+x_{n})-x_n-\binom{s+1}{2}\leq s x(V)
    -y'_{n,n-1}-\binom{s+1}{2}\stackrel{\eqref{eq:clique_minus_3}}{\leq}y'_{ij}(E).\]
  For $s=n-1$, we combine~\eqref{eq:clique_minus_0} and~\eqref{eq:clique_minus_1}:
  \begin{multline*}
    y'(E)=\left[y'(E(V\setminus\{n-1,n\}))
      +\frac12\sum_{i=1}^{n-2}\left(y'_{i,n-1}+y'_{in}\right)\right]+\frac12\sum_{i=1}^{n-2}\left(y'_{i,n-1}+y'_{in}\right)\\
    \stackrel{\eqref{eq:clique_minus_0},\eqref{eq:clique_minus_1}}{\geq}(n-2)\left[x(V\setminus\{n-1,n\})+\frac{x_{n-1}+x_n}{2}\right]-\binom{n-1}{2}\\
    +x\left(V\setminus\{n-1,n\}\right)+\frac{(n-2)(x_{n-1}+x_n)}{2}-(n-2)\\
    =(n-1)x(V)-(x_{n-1}+x_n)-\binom{n}{2}+1\geq(n-1)x(V)-x_n-\binom{n}{2},    
  \end{multline*}
  as required.
\item[Case 2] $b<1$. Then $0\leq s\leq n-2$, and
  \begin{align*}
    \sum_{ij\in E}y_{ij} &=
                           a\left[\binom{s+2}{2}-1\right]+(x_n-a)\left[\binom{s+1}{2}-1\right]+(b-x_n)\binom{s+1}{2}+(1-b)\binom{s}{2}\\
                         &=a(s+1)-x_n+bs+\binom{s}{2}\\
                         &= (x_1+\dotsb+x_s+x_n-s)(s+1)-x_n+(x_{s+1}+\dotsb+x_{n-1})s+\binom{s}{2}\\
                         &=sx(V)+(x_1+\dots+x_s)-s-\binom{s+1}{2}\\
                         &=s\left[x\left(V\setminus\{n-1,n\}\right)+\frac{x_{n-1}+x_n}{2}\right]+\frac12\sum_{i=1}^{s}\left(2x_i+x_{n-1}+x_n-2\right)-\binom{s+1}{2}.   
  \end{align*}
  Next we verify that this is a lower bound for $\LB_P[f](\vect x)$. We start with the inequality
  \begin{multline*}
    \sum_{ij\in E}y'_{ij}=\sum_{ij\in
      E(V\setminus\{n-1,n\})}y'_{ij}+\sum_{i=1}^{n-2}\left(y'_{i,n-1}+y'_{in}\right)\\
    \geq\left[\sum_{ij\in
      E(V\setminus\{n-1,n\})}y'_{ij}+\frac12\sum_{i=1}^{n-2}\left(y'_{i,n-1}+y'_{in}\right)\right]+\frac12\sum_{i=1}^s\left(y'_{i,n-1}+y'_{in}\right).    
  \end{multline*}
  We use \eqref{eq:clique_minus_0} and~\eqref{eq:clique_minus_1} to bound the second and the first
  part, respectively:
  \begin{align*}
    \sum_{ij\in
    E(V\setminus\{n-1,n\})}y'_{ij}+\frac12\sum_{i=1}^{n-2}\left(y'_{i,n-1}+y'_{in}\right)&\stackrel{\eqref{eq:clique_minus_1}}{\geq}s\left[x\left(V\setminus\{n-1,n\}\right)+\frac{x_{n-1}+x_n}{2}\right]-\binom{s+1}{2},\\
    \frac12\sum_{i=1}^s\left(y'_{i,n-1}+y'_{in}\right) &\stackrel{\eqref{eq:clique_minus_0}}{\geq} \frac12\sum_{i=1}^s\left(2x_i+x_{n-1}+x_n-2\right),
  \end{align*}
  and as a consequence $y'(E)\geq y(E)$, as required.
\end{description}
The second part of the theorem (that $P$ is a minimal extension of $X(f)$) is proved by identifying,
for each inequality listed in Theorem~\ref{thm:clique_minus}, a point $(\vect x,\vect y)$ which is
contained in the polytope $P'$ obtained from $P$ by omitting the inequality, such that $\pi[f](\vect
x,\vect y)\not\in X(f)$. This is described in detail in Appendix~\ref{sec:minimality_proof}. 

\subsection{Proof of Theorem~\ref{thm:cycles}}\label{sec:cycles}
Throughout all indices are in $[n]$ and have to be read modulo $n$ in the obvious way. In particular,  $n \equiv 0$ and $n+1\equiv 1$. The $n$-cycle corresponds to the function
\[
f(\vect x)\eq\sum_{i=1}^{n}a_ix_ix_{i+1},
\] 
where $\vect a$ is arbitrary with $a_{i}\neq 0$ for all $i\in[n]$. Let $P\subseteq[0,1]^{2n}$ be the polytope described by the McCormick inequalities
together with~\eqref{eq:cycle_1} and~\eqref{eq:cycle_2}. We claim that $\pi[f](P)=X(f)$, and
that~\eqref{eq:cycle_1} (resp.~\eqref{eq:cycle_2}) can be omitted if $\lvert E^-\rvert$
(resp. $\lvert E^+\rvert$) is even.

We need to show that for every $\vect x\in[0,1]^n$ we have $\vex[f](\vect x)=\LB_P[f](\vect x)$ and
$\cav[f](\vect x)=\UB_P[f](\vect x)$. We present the argument for $\cav[f](\vect x)=\UB_P[f](\vect
x)$ in detail, as $\vex[f](\vect x)=\LB_P[f](\vect x)$ can be proved similarly. Fix $\vect x\in[0,1]^n$ and put
\begin{align*}
\mu_i&=\min\{x_i,x_{i+1}\}, \\ 
\eta_i&=\max\{0,x_i+x_{i+1}-1\}, \\ 
A&=x(V^+)-x(V^-)+\left\lfloor\frac{\lvert E^-\rvert}{2}\right\rfloor.
\end{align*}
Then
\begin{align}
  \UB_P[f](\vect x) &= \max\left\{\sum_{i=1}^na_iy_i\ :\ \eta_i\leqslant y_i\leqslant\mu_i,\
    y(E^+)-y(E^-)\leqslant A\right\}\label{eq:UB_primal}\\
&\nonumber = \min\Bigg\{\sum_{i=1}^n\left(\mu_iz_i-\eta_iw_i\right)+A\alpha\ :\ 
z_i-w_i+\alpha\geqslant a_i\text{ for }i\in E^+,\\ 
&\qquad\qquad\qquad\qquad\qquad\qquad\qquad z_i-w_i-\alpha\geqslant a_i\text{ for }i\in
E^-,\ z_i,w_i,\alpha\geqslant 0\Bigg\}. \label{eq:UB_dual}
\end{align}
W.l.o.g. we assume $\lvert a_n\rvert\leqslant \lvert a_i\rvert$ for all $i\in[n]$, and we define sets
$X_i\subseteq[0,1)$ as follows. Let $X_1=[0,x_1)$ and
\[X_2=
\begin{cases}
[0,x_2) & \text{if } a_1>0,\\
[1-x_2,1) & \text{if } a_1<0.
\end{cases}\]
For $i\geqslant 3$ we set $E^-_i=\{j\ :\ i\leqslant j\leqslant n,\ a_j<0\}$ and define $X_i$
depending on the parity of $\lvert E_i^-\rvert$ and the sign of $a_{i-1}$. Intuitively, we can think
of filling a bucket of capacity $x_i$ from the reservoirs $R_1=X_{i-1}\setminus
X_1$, $R_2=X_{i-1}\cap X_1$, $R_3=[0,1)\setminus(X_1\cup X_{i-1})$ and $R_4=X_1\setminus X_{i-1}$, and there are
two objectives:
\begin{enumerate}
\item If $a_{i-1}>0$ we want to maximize $\mu( X_{i-1}\cap X_i)$, so the reservoirs
  $R_1$ and $R_2$ are used before $R_3$ and $R_4$. If $a_{i-1}<0$ then we want to minimize $\mu( X_{i-1}\cap X_i)$, so $R_3$ and $R_4$ are used before $R_1$ and $R_2$.
\item If $\lvert E_i^-\rvert$ is odd we want to minimize $\mu( X_{1}\cap X_i)$, so $R_1$ is
  used before $R_2$ and $R_3$ before $R_4$. If $\lvert E_i^-\rvert$ is even we want to maximize $\mu( X_{1}\cap X_i)$, so $R_2$ is
  used before $R_1$ and $R_4$ before $R_3$.
\end{enumerate}
\begin{algorithm}
  \caption{\texttt{Bucket}$(Y_1,Y_2,Y_3,Y_4,x)$}\label{alg:bucket}
  \begin{tabbing}
    .....\=.....\=.....\=...............................\kill\\[-3ex]
    \textbf{Input:} \>\>\> A partition
    $[0,1)=Y_1\cup Y_2\cup Y_3\cup Y_4$ (with $Y_k\in\mathcal L$) and capacity $x\in[0,1]$\\[1ex]
    Initialize $X\leftarrow\emptyset$ and $k\leftarrow 1$\\
    \textbf{while } $\mu( X)<x$ \textbf{ do}\\ 
    \> Let $Z\in\mathcal L$ be a subset of $Y_k$ with $\mu( Z)=\min\left\{\mu( Y_k),\,x-\mu( X)\right\}$\\
    \> $X\leftarrow X\cup Z$\\
    \> $k\leftarrow k+1$\\[1ex]
    \textbf{Output:} $X\in\mathcal L$ with $\mu( X)=x$
  \end{tabbing}
\end{algorithm}
More formally,
\[X_i=
\begin{cases}
  \texttt{Bucket}(R_1,R_2,R_3,R_4,x_i) & \text{if }a_{i-1}>0\text{ and }\lvert E^-_i\rvert\text{
    odd},\\
  \texttt{Bucket}(R_2,R_1,R_4,R_3,x_i) & \text{if }a_{i-1}>0\text{ and }\lvert E^-_i\rvert\text{
    even},\\
  \texttt{Bucket}(R_3,R_4,R_1,R_2,x_i) & \text{if }a_{i-1}<0\text{ and }\lvert E^-_i\rvert\text{
    odd},\\
  \texttt{Bucket}(R_4,R_3,R_2,R_1,x_i) & \text{if }a_{i-1}<0\text{ and }\lvert E^-_i\rvert\text{
    even}.
\end{cases}
\]
where the function \texttt{Bucket} is described in Algorithm~\ref{alg:bucket}. Note that this
corresponds to a solution with 
\[y_i=\mu( X_i\cap X_{i+1})=
\begin{cases}
  \mu_i & \text{for }i\in E^+\setminus\{n\},\\
  \eta_i & \text{for }i\in E^-\setminus\{n\}.
\end{cases}
\]
\begin{example}
  \begin{figure}[htb]
    \begin{minipage}[b]{.49\linewidth}
\centering
  \begin{tikzpicture}[scale=2,every node/.style={circle,fill=black,draw,inner sep=2pt,outer
      sep=2pt}]
\node[label={right:$x_1=0.6$}] (1) at (0:1) {};
\node[label={[label distance=-.2cm]45:$x_2=0.5$}] (2) at (45:1) {};
\node[label={[label distance=-.4cm]90:$x_3=0.3$}] (3) at (90:1) {};
\node[label={[label distance=-.2cm]135:$x_4=0.5$}] (4) at (135:1) {};
\node[label={[label distance=0cm]180:$x_5=0.4$}] (5) at (180:1) {};
\node[label={[label distance=-.2cm]225:$x_6=0.6$}] (6) at (225:1) {};
\node[label={[label distance=-.4cm]270:$x_7=0.5$}] (7) at (270:1) {};
\node[label={[label distance=-.2cm]315:$x_8=0.6$}] (8) at (315:1) {};
\draw[thick] (1) to node[draw=none,fill=none,left] {$+$} (2) to node[draw=none,fill=none,below]
{$-$} (3) to node[draw=none,fill=none,below] {$+$} (4) to node[draw=none,fill=none,right] {$-$} (5)
to node[draw=none,fill=none,right] {$-$} (6) to node[draw=none,fill=none,above] {$+$} (7) to
node[draw=none,fill=none,above] {$+$} (8) to node[draw=none,fill=none,left] {$+$} (1); 
  \end{tikzpicture}\hfill
\end{minipage}
\begin{minipage}[b]{.49\linewidth}
\centering
  \begin{tikzpicture}[xscale=6,yscale=1.2]
\draw[thick] (0,0) -- (1,0);
\draw[thick] (0,.1) -- (0,.-.1);
\draw (.1,.05) -- (.1,-.05);
\draw (.2,.05) -- (.2,-.05);
\draw (.3,.05) -- (.3,-.05);
\draw (.4,.05) -- (.4,-.05);
\draw[thick] (.5,.1) -- (.5,.-.1);
\draw (.6,.05) -- (.6,-.05);
\draw (.7,.05) -- (.7,-.05);
\draw (.8,.05) -- (.8,-.05);
\draw (.9,.05) -- (.9,-.05);
\draw[thick] (1,.1) -- (1,.-.1);
\node at (0,-.4) {$0$};
\node at (1,-.4) {$1$};
\node at (.5,-.4) {$0.5$};
 \draw[fill=lightgray] (0,.2) rectangle (.6,.6);
 \draw[fill=lightgray] (0,.7) rectangle (.5,1.1);
 \draw[fill=lightgray] (.5,1.2) rectangle (.8,1.6);
 \draw[fill=lightgray] (.3,1.7) rectangle (.8,2.1);
 \draw[fill=lightgray] (.8,2.2) rectangle (1,2.6); 
 \draw[fill=lightgray] (.1,2.2) rectangle (.3,2.6);
 \draw[fill=lightgray] (0,2.7) rectangle (.1,3.1);
 \draw[fill=lightgray] (.3,2.7) rectangle (.8,3.1);
 \draw[fill=lightgray] (0,3.2) rectangle (.1,3.6);
 \draw[fill=lightgray] (.3,3.2) rectangle (.7,3.6);
 \draw[fill=lightgray] (0,3.7) rectangle (.1,4.1);
 \draw[fill=lightgray] (.2,3.7) rectangle (.7,4.1);
 \node at (-.1,.4) {$X_1$};
 \node at (-.1,.9) {$X_2$};
 \node at (-.1,1.4) {$X_3$};
 \node at (-.1,1.9) {$X_4$};
 \node at (-.1,2.4) {$X_5$};  
 \node at (-.1,2.9) {$X_6$};
 \node at (-.1,3.4) {$X_7$};
 \node at (-.1,3.9) {$X_8$};
\end{tikzpicture}  
    \end{minipage}
    \caption{Constructing the sets $X_i$, where the edge labels indicate the sign of
      the coefficient $a_i$.}
    \label{fig:cycle}
  \end{figure}
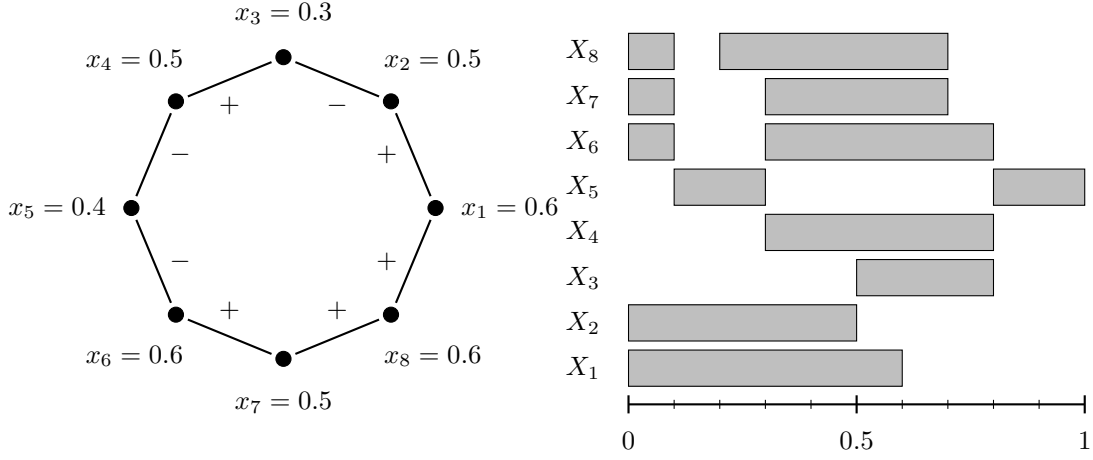
  The construction is illustrated in Figure~\ref{fig:cycle}, and with $\lvert a_i\rvert=1$ for all
  $i\in[8]$ the corresponding objective value is $2.3$ and we conclude $\cav[f](\vect x)=\UB_P[f](\vect x)=2.3$
  because~\eqref{eq:cycle_1} becomes
\[x(V^-)-x(V^+)+y(E^+)-y(E^-)=0.4-1.7+f(\vect x)\leqslant \left\lfloor\frac{\lvert E^-\rvert}{2}\right\rfloor=1.\]
\end{example}
Next we define the \emph{defects}
\[\delta_i=
\begin{cases}
  \mu( X_1\cap X_i)-\max\{0,x_1+x_i-1\} & \text{if }\lvert E^-_i\rvert\text{ is odd},\\
\min\{x_i,x_1\}-\mu( X_1\cap X_i)& \text{if }\lvert E^-_i\rvert\text{ is even}.
\end{cases}
\]
We have $\delta_i\geqslant 0$ for all $i\in[n]$ and
\[\cav[f](\vect x)\geqslant\sum_{i=1}^na_{i}\mu( X_i\cap X_{i+1})=\sum_{i\in E^+}a_i\mu_i+\sum_{i\in
  E^-}a_i\eta_i-\lvert a_n\rvert\delta_n,\]
so in order to complete the proof of the claim $\cav[f](\vect x)=\UB_P[f](\vect x)$ it is sufficient to
prove
\begin{equation} \label{eq:target}
\UB_P[f](\vect x)\leqslant \sum_{i\in E^+}a_i\mu_i+\sum_{i\in E^-}a_i\eta_i-\lvert
a_n\rvert\delta_n. 
\end{equation}
If $\delta_n=0$ then this follows immediately from the McCormick inequalities which imply
\[a_iy_i\leqslant
\begin{cases}
  a_i\mu_i & \text{for }i\in E^+,\\
a_i\eta_i & \text{for }i\in E^-.
\end{cases}
\]
For $\delta_n>0$ the claim is a consequence of the following lemma.
\begin{lemma}\label{lem:cycle_UB}
If $\delta_n>0$ then
$\displaystyle\delta_n=\sum_{i\in E^+}\mu_i-\sum_{i\in E^-}\eta_i-A$.
\end{lemma}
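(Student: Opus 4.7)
The plan is to combine a valid-inequality bound with a direct computation of the Bucket algorithm's output, and show that these two bounds match exactly.

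First, via the set interpretation of Corollary~\ref{cor:set_interpretation}, the sets $X_1, \dotsc, X_n$ furnish a convex combination representing the point $(\vec x, \tilde{\vec y})$ with $\tilde y_i := \mu(X_i \cap X_{i+1})$ as a point in $\QP(C_n)$. Hence $(\vec x, \tilde{\vec y})$ satisfies the cycle inequality~\eqref{eq:cycle_1}, which rearranges to $\tilde y(E^+) - \tilde y(E^-) \leqslant A$. Combined with the fact (immediate from the priorities in \texttt{Bucket}) that for $i = 1, \dotsc, n-1$ one has $\tilde y_i = \mu_i$ when $i \in E^+$ and $\tilde y_i = \eta_i$ when $i \in E^-$, substituting and reading off the definition of $\delta_n$ yields one direction of the claim, $\delta_n \geqslant \sum_{i \in E^+} \mu_i - \sum_{i \in E^-} \eta_i - A$.

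For the reverse inequality I would compute $\gamma_i := \mu(X_1 \cap X_i)$ inductively from the Bucket construction and verify that the cycle inequality is in fact tight at $(\vec x, \tilde{\vec y})$. The priorities in \texttt{Bucket} are designed so that at each step $i \geqslant 2$, the intersection with $X_{i-1}$ attains its McCormick extreme (dictated by $\sign(a_{i-1})$), and then the intersection with $X_1$ is pushed toward its own extreme~--- the $\min$-form if $\lvert E^-_i \rvert$ is even and the $\max$-form if odd. Under the hypothesis $\delta_n > 0$, no reservoir is exhausted before the final iteration, so for every $i < n$ the ``$X_1$-target'' is attained exactly, and the recurrence for $\gamma_i$ is linear in the $x_j$.

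Unfolding this recurrence from $i = 2$ to $i = n$, each negative edge toggles the parity of $\lvert E^-_i \rvert$ and hence switches the target for $\gamma_{i+1}$ between its $\mu$-type and $\eta$-type values. Summing the resulting contributions telescopes to an expression whose combinatorial content is precisely $x(V^+) - x(V^-) + \lfloor \lvert E^- \rvert / 2 \rfloor$, where the floor accounts for pairs of parity flips along the cycle. Substituting the resulting value of $\gamma_n$ into the definition of $\delta_n$ (selecting the relevant case by the parity of $\lvert E^-_n \rvert$) then produces the claimed identity $\delta_n = \sum_{i \in E^+} \mu_i - \sum_{i \in E^-} \eta_i - A$.

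The main obstacle is the four-way case analysis in the recurrence for $\gamma_i$, corresponding to the product of $\sign(a_{i-1})$ and the parity of $\lvert E^-_i \rvert$. Each case yields a different reservoir ordering in \texttt{Bucket}, hence a different transition $\gamma_i \mapsto \gamma_{i+1}$, and these must be matched carefully against the decomposition $E = E^+ \cup E^-$ in the telescoping step. The assumption $\delta_n > 0$ is essential: it confines us to the ``regular'' regime in which no reservoir empties early, so that the telescoping identity holds without correction terms.
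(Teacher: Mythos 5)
Your first step is correct and is a genuinely useful reorganization that the paper does not make explicit: since the point $(\vect x,\tilde{\vect y})$ with $\tilde y_i=\mu(X_i\cap X_{i+1})$ lies in $\QP(C_n)$ by Theorem~\ref{thm:zucker}, and since $\tilde y_i=\mu_i$ for $i\in E^+\setminus\{n\}$ and $\tilde y_i=\eta_i$ for $i\in E^-\setminus\{n\}$, the validity of~\eqref{eq:cycle_1} at this point yields $\delta_n\geqslant\sum_{i\in E^+}\mu_i-\sum_{i\in E^-}\eta_i-A$. (You should, however, note that the hypothesis $\delta_n>0$ forces $\lvert E^-\rvert$ to be odd --- otherwise $\delta_2=0$ and hence $\delta_n=0$ by the monotonicity of the defects --- since only for odd $\lvert E^-\rvert$ is~\eqref{eq:cycle_1} one of Padberg's cycle inequalities~\eqref{eq:odd_cycle}.) The paper does not split the statement into two inequalities at all; it proves the equality in one piece by induction on $n$, contracting the edges $n-1$ and $n$ into a single edge of sign $\sign(a_{n-1}a_n)$ and relating $\delta_n$ to $\delta_{n-1}$ through the explicit recurrence of Lemma~\ref{lem:mono}.

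The second half of your argument, which is where the entire content of the lemma lies, has a genuine gap. The assertion that for every $i<n$ the ``$X_1$-target is attained exactly'' is false under the hypothesis $\delta_n>0$: attaining that target at step $i$ means precisely $\delta_i=0$, and since the sequence $(\delta_i)$ is nonincreasing (Lemma~\ref{lem:mono}, which you would also need to state and prove) this would force $\delta_n=0$, contradicting the hypothesis. Similarly, ``no reservoir is exhausted before the final iteration'' is neither made precise nor true in general --- the first-priority reservoir in \texttt{Bucket} is exhausted whenever $x_i$ exceeds its measure, which certainly happens. What actually has to be established is the exact transition $\delta_i=\delta_{i-1}-(\cdots)$, where the subtracted term is a maximum of three quantities whose form depends on the sign of $a_{i-1}$, the parity of $\lvert E_i^-\rvert$, and comparisons among $x_1$, $x_{i-1}$ and $x_i$; this is precisely the content of Lemma~\ref{lem:mono} together with the case analysis in Appendix~\ref{sec:det-cycle_UB}. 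Your ``telescoping'' step asserts the outcome of that computation without performing it and rests on the incorrect premise above, so as written the proposal establishes only the easy inequality $\delta_n\geqslant\sum_{i\in E^+}\mu_i-\sum_{i\in E^-}\eta_i-A$ and leaves the converse unproved.
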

Using Lemma~\ref{lem:cycle_UB}, we can prove inequality~\eqref{eq:target} by LP duality. If
$a_n>0$ we define a solution for the dual problem~\eqref{eq:UB_dual} by $\alpha=a_n$ and (using the
assumption that $\lvert a_n\rvert\leqslant\lvert a_i\rvert$ for all $i\in[n]$)
\begin{align*}
  z_i&=
  \begin{cases}
    a_i-a_n &\text{for }i\in E^+,\\
     0 &\text{for }i\in E^-,
  \end{cases} &
 w_i&=
  \begin{cases}
     0&\text{for }i\in E^+,\\
     -a_i-a_n &\text{for }i\in E^-.
  \end{cases}
\end{align*}
This is a feasible solution for~\eqref{eq:UB_dual} with objective value
\begin{multline*}
  \sum_{i\in E^+}\mu_i(a_i-a_n)+\sum_{i\in E^-}\eta_i(a_i+a_n)+Aa_n=\sum_{i\in
    E^+}a_i\mu_i+\sum_{i\in E^-}a_i\eta_i+a_n\left(A-\sum_{i\in E^+}\mu_i+\sum_{i\in
      E^-}\eta_i\right)\\
=\sum_{i\in E^+}a_i\mu_i+\sum_{i\in E^-}a_i\eta_i-a_n\delta_n.
\end{multline*}
Similarly, for $a_n<0$ we define a solution for the dual problem~\eqref{eq:UB_dual} by
$\alpha=-a_n$ and 
\begin{align*}
  z_i&=
  \begin{cases}
    a_i+a_n &\text{for }i\in E^+,\\
     0 &\text{for }i\in E^-,
  \end{cases} &
 w_i&=
  \begin{cases}
     0&\text{for }i\in E^+,\\
     a_n-a_i &\text{for }i\in E^-.
  \end{cases}
\end{align*}
with objective value
\begin{multline*}
  \sum_{i\in E^+}\mu_i(a_i+a_n)+\sum_{i\in E^-}\eta_i(a_i-a_n)-Aa_n=\sum_{i\in
    E^+}a_i\mu_i+\sum_{i\in E^-}a_i\eta_i+a_n\left(\sum_{i\in E^+}\mu_i-\sum_{i\in
      E^-}\eta_i-A\right)\\
=\sum_{i\in E^+}a_i\mu_i+\sum_{i\in E^-}a_i\eta_i+a_n\delta_n.
\end{multline*}
Before proving Lemma~\ref{lem:cycle_UB} we show that the sequence $(\delta_i)_{i=2,\dotsc,n}$ is decreasing, and
therefore in the proof of Lemma~\ref{lem:cycle_UB} we may assume $\delta_i>0$ for all $i\in[n]$.
\begin{lemma}\label{lem:mono}
  For all $i\in\{3,\dotsc,n\}$ we have $\delta_i\leqslant\delta_{i-1}$.
\end{lemma}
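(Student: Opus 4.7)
The plan is to proceed by case analysis on (i) the sign of $a_{i-1}$, which determines the primary ordering in \texttt{Bucket} (either $X_{i-1}$ or its complement is filled first), and (ii) the parity of $\lvert E^-_i\rvert$, which governs whether $\gamma_i := \mu(X_1\cap X_i)$ should be minimized or maximized. These jointly select one of the four priority orderings used to construct $X_i$. Throughout I would write $r_k = \mu(R_k)$; the identities $r_1+r_2 = x_{i-1}$, $r_2+r_4 = x_1$, and $r_1+r_2+r_3+r_4 = 1$ express all four reservoir sizes in terms of $x_1,x_{i-1}$ and the single parameter $r_2=\gamma_{i-1}$.

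Within each of the four cases, I would split further into four subcases according to which reservoir is partially filled by the bucket of capacity $x_i$. Since only $R_2$ and $R_4$ lie inside $X_1$, $\gamma_i$ is an explicit piecewise-linear function of $x_i$ in each subcase. Substituting into $\delta_i$ and $\delta_{i-1}$ (keeping track that the parity of $\lvert E^-_{i-1}\rvert$ agrees with that of $\lvert E^-_i\rvert$ when $a_{i-1}>0$ and differs when $a_{i-1}<0$) reduces the target inequality $\delta_i\le\delta_{i-1}$ to an elementary relation in $x_1,x_{i-1},x_i,r_2$.

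The extreme subcases are immediate: when $x_i$ fits inside the first prioritized reservoir, or when it reaches the last, $\gamma_i$ attains either $L_i := \max(0,x_1+x_i-1)$ or $U_i := \min(x_1,x_i)$, forcing $\delta_i=0$. The middle subcases reduce to short manipulations using the subcase's defining constraint (of the form $x_i\le x_{i-1}$, $x_i\le 1-x_{i-1}$, or $x_i\le 1-r_2$), the comparisons of $x_1+x_i$ with $1$ and of $x_i$ with $x_{i-1}$, and the implicit bound $r_2\le \min(x_1,x_{i-1})$.

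The subtle part appears in Cases~3 and~4 where $a_{i-1}<0$ flips the parity, so $\delta_{i-1}$ and $\delta_i$ measure slack from opposite bounds and there is a genuine worry that $\delta_i$ could exceed $\delta_{i-1}$. The resolution in every such subcase is that its defining inequality on $x_i$ is incompatible with the apparently bad configuration: e.g., in Case~4 with $r_4<x_i\le r_4+r_3 = 1-x_{i-1}$ and $x_i\ge x_1$ one deduces $x_1+x_{i-1}\le 1$, hence $L_{i-1}=0$ and $\delta_{i-1}=r_2=\delta_i$. The main obstacle is not any single deep step but the bookkeeping across sixteen subcases, which I would organize by tabulating the four priority orderings, computing $\gamma_i$ uniformly per subcase, and verifying the inequalities in parallel.
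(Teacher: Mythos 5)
Your proposal is correct and follows essentially the same route as the paper: a case split on the sign of $a_{i-1}$ and the relevant parity (the paper indexes by $\lvert E^-_{i-1}\rvert$, you by $\lvert E^-_i\rvert$, which agree up to the sign of $a_{i-1}$), followed by tracking which reservoir the bucket filling ends in to get $\mu(X_1\cap X_i)$ piecewise-linearly in $x_i$. The paper merely records the resulting closed forms $\delta_i=\delta_{i-1}-\max\{0,\dotsc\}$ in each case; your outline supplies the derivation behind them, including the correct handling of the parity flip when $a_{i-1}<0$.
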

\begin{proof}
  We check the following implications.
  \begin{enumerate}
  \item $\lvert E_{i-1}\rvert$ odd, $a_{i-1}>0$ $\implies$
    $\delta_i=\delta_{i-1}-
    \begin{cases}
\max\{0,\,x_{i-1}-x_i,\,(x_1+x_i-1)\} & \text{if }x_1+x_{i-1}\leqslant 1,\\
\max\{0,\,1-x_1-x_i,\,x_i-x_{i-1}\}  & \text{if }x_1+x_{i-1}\geqslant 1.    
    \end{cases}$
  \item $\lvert E_{i-1}\rvert$ odd, $a_{i-1}<0$ $\implies$ $\delta_i=\delta_{i-1}-
    \begin{cases}
\max\{0,\,x_{1}-x_i,\,(x_{i-1}+x_i-1)\} & \text{if }x_1+x_{i-1}\leqslant 1,\\
\max\{0,\,1-x_{i-1}-x_i,\,x_i-x_{1}\}  & \text{if }x_1+x_{i-1}\geqslant 1.    
    \end{cases}$
  \item $\lvert E_{i-1}\rvert$ even, $a_{i-1}>0$ $\implies$
    $\delta_i=\delta_{i-1}-
    \begin{cases}
\max\{0,\,x_{i-1}-x_i,\,(x_i-1)\} & \text{if }x_{i-1}\leqslant x_1,\\
\max\{0,\,x_1-x_i,\,x_i-x_{i-1}\}  & \text{if }x_{i-1}\geqslant x_1.    
    \end{cases}$
  \item $\lvert E_{i-1}\rvert$ even, $a_{i-1}<0$ $\implies$ $\delta_i=\delta_{i-1}-
    \begin{cases}
\max\{0,\,1-x_{1}-x_i,\,x_{i-1}+x_i-1\} & \text{if }x_{i-1}\leqslant x_1,\\
\max\{0,\,1-x_{i-1}-x_i,\,x_1+x_i-1\}  & \text{if }x_{i-1}\geqslant x_1.    
    \end{cases}$\qedhere
  \end{enumerate}
\end{proof}
\begin{proof}[Proof of Lemma~\ref{lem:cycle_UB}] 
  If $\lvert E^-\rvert$ is even then $\delta_2=0$, hence $\delta_i=0$ for all $i\in[2,n]$ by
  Lemma~\ref{lem:mono}, and there is nothing to do. For odd $\lvert E^-\rvert$ we proceed by
  induction on $n$. Note that the construction of the sets $X_i$, the definition of the numbers
  $\delta_i$, and the statement of the lemma depend only on sequences $(x_1,\dotsc,x_n)$ and
  $(a_1,\dotsc,a_n)$ (actually only on the signs of the $a_i$). Therefore we can use $n=2$ as the
  base case. Then $\lvert E^-\rvert=1$, $V^+=V^-=\emptyset$, $A=0$, and we have the following two cases.
\begin{description}
\item[Case 1] if $a_2<0<a_1$, then $\mu( X_1\cap X_2)=\min\{x_1,x_2\}$ and
\[\delta_2=\mu( X_1\cap X_2)-\max\{0,x_1+x_2-1\}=\mu_1-\eta_2.\]
\item[Case 2] if $a_1<0<a_2$, then $\mu( X_1\cap X_2)=\max\{0,x_1+x_2-1\}$ and
\[\delta_2=\min\{x_1,x_2\}-\mu( X_1\cap X_2)=\mu_2-\eta_1.\]
\end{description}
Now let $n\geqslant 3$ and set
\[\gamma=\sum_{i\in E^+}\mu_i-\sum_{i\in E^-}\eta_i-A=\sum_{i\in E^+}\mu_i-\sum_{i\in
    E^-}\eta_i-x(V^+)+x(V^-)-\left\lfloor\frac{\lvert E^-\rvert}{2}\right\rfloor,\] so that our aim
becomes to show $\delta_n=\gamma$.  Applying the induction hypothesis to the sets
$X_1,\dots,X_{n-1}$ that are obtained by applying the construction for the sequences
$(x_1,\dots,x_{n-1})$ and $(a_1,\dots,a_{n-2},\sign(a_{n-1}a_n))$ (the number of negative terms is
still odd), we get
\[\delta_{n-1}=
\begin{cases}
  \gamma-\mu_{n-1}-\mu_{n}+x_n+\min\{x_{n-1},\,x_1\} & \text{if }n-1,n\in E^+,\\ 
    \gamma+\eta_{n-1}+\eta_{n}-x_n-x_{n-1}-x_1+\min\{x_{n-1},\,x_1\}+1 & \text{if }n-1,n\in E^-,\ \\ 
  \gamma-\mu_{n-1}+\eta_{n}+x_{n-1}-\max\{0,\,x_{n-1}+x_1-1\} & \text{if }n-1\in E^+,\,n\in E^-,\\ 
   \gamma+\eta_{n-1}-\mu_{n}+x_1-\max\{0,\,x_{n-1}+x_1-1\} & \text{if }n-1\in E^-,\,n\in E^+. 
\end{cases}
\]
Now we discuss the four cases separately. The detailed case analysis can be found in Appendix~\ref{sec:det-cycle_UB}.

\begin{description}
\item[Case 1] $n-1,n\in E^+$. We may assume $\mu( X_1\cap X_{n-1})<x_n<x_1+x_{n-1}-\mu( X_1\cap X_{n-1})$ since otherwise $\delta_n=0$.
  \begin{description}
  \item[Case 1.1] $x_{n-1}\leqslant x_1$. In this case $\delta_{n-1}=\mu( X_{n-1}\setminus
    X_{1})$. 
  \item[Case 1.2] $x_{n-1}>x_1$. In this case $\delta_{n-1}=\mu( X_1\setminus X_{n-1})$. 
  \end{description}
\item[Case 2] $n-1,n\in E^-$. We may assume $1-\mu( X_1\cup X_{n-1})<x_n<1-\mu( X_1\cap X_{n-1})$ since otherwise $\delta_n=0$.
  \begin{description}
  \item[Case 2.1] $x_{n-1}\leqslant x_1$. In this case $\delta_{n-1}=\mu( X_{n-1}\setminus
    X_{1})=\mu( X_1\cup X_{n-1})-x_1$. 
  \item[Case 2.2] $x_{n-1}>x_1$. In this case $\delta_{n-1}=\mu( X_1\setminus X_{n-1})=\mu( X_1\cup X_{n-1})-x_{n-1}$. 
  \end{description}
\item[Case 3] $n-1\in E^+,\,n\in E^-$. We may assume $\mu( X_{n-1}\setminus X_1)<x_n<1-\mu( X_{1}\setminus X_{n-1})$ since otherwise $\delta_n=0$.
  \begin{description}
  \item[Case 3.1] $x_1+x_{n-1}\leqslant 1$. In this case $\delta_{n-1}=\mu( X_1\cap
    X_{n-1})$.
  \item[Case 3.2] $x_1+x_{n-1}> 1$. In this case $\delta_{n-1}=\mu([0,1)\setminus(X_1\cup
    X_{n-1}))$.
  \end{description}
\item[Case 4] $n-1\in E^-,\,n\in E^+$. We may assume $\mu( X_{n-1}\setminus X_1)<x_n<1-\mu( X_{n-1}\setminus X_1)$ since otherwise $\delta_n=0$.
  \begin{description}
  \item[Case 4.1] $x_1+x_{n-1}\leqslant 1$. In this case $\delta_{n-1}=\mu( X_1\cap
    X_{n-1})$.
  \item[Case 4.2] $x_1+x_{n-1}> 1$. In this case $\delta_{n-1}=\mu([0,1)\setminus(X_1\cup
    X_{n-1}))$.\qedhere
  \end{description}
\end{description}
\end{proof}
Finally, to complete our proof of Theorem~\ref{thm:cycles}, we need to verify necessity of the McCormick inequalities~\eqref{eq:mccormick} and the inequalities~\eqref{eq:cycle_1} and~\eqref{eq:cycle_2}. For each of these inequalities, we exhibit a point $(\vect x,\vect y)\in P'$, where $P^{\prime}$ is the polytope obtained by dropping an inequality from $P$, such that $\pi[f](\vect x,\vect y)\not\in X(f)$.


\subsubsection*{The inequality $y_i\leq x_i$}
  Let $x_{i+1}=1$ and $x_j=0$ for all $j\in[n]\setminus\{i+1\}$, so that
  $(\vect x,z)\in X(f)\iff z=0$. Setting $y_i=1$ and $y_j=0$ for all $j\in[n]\setminus\{i\}$, we
  obtain a point $(\vect x,\vect y)$ with $\pi[f](\vect x,\vect y)=(\vect x,a_i)\not\in X(f)$. If
  $E^-\neq\{i+1\}$ and $E^+\neq\{i+1\}$, then $(\vect x,\vect y)$ satisfies~\eqref{eq:cycle_1}
  and~\eqref{eq:cycle_2}, so $(\vect x,\vect y)\in P'$, as required. But if $E^-=\{i+1\}$
  (resp. $E^+=\{i+1\}$) then $(\vect x,\vect y)$ is cut off by~\eqref{eq:cycle_1} (resp.~\eqref{eq:cycle_2}). In these cases we
  use the point given by $x_{i+1}=x_{i+2}=1$, $x_j=0$ for all $j\in[n]\setminus\{i+1,i+2\}$,
  $y_i=y_{i+1}=1$ and $y_j=0$ for all $j\in[n]\setminus\{i,i+1\}$ instead.

  The inequality $y_i\leq x_{i+1}$ can be treated similarly.
  
\subsubsection*{The inequality $y_i\geq x_i+x_{i+1}-1$}
Let $x_{i}=x_{i+1}=1$ and $x_j=0$ for all $j\in[n]\setminus\{i,i+1\}$, so that $(\vect
    x,z)\in X(f)\iff z=a_i$. Setting $\vect y=\vect 0$, we obtain
    a point $(\vect x,\vect y)$ with $\pi[f](\vect x,\vect y)=(\vect x,0)\not\in
    X(f)$. If $E^+\neq\{i-1,i,i+1\}$ and $E^-\neq\{i-1,i,i+1\}$ then $(\vect x,\vect y)\in P'$ and we are
    done. Otherwise, we use the point given by $x_{i}=x_{i+1}=x_{i+2}=1$, $x_j=0$ for all
    $j\in[n]\setminus\{i,i+1,i+2\}$, $y_{i+1}=1$ and $y_j=0$ for all $j\in[n]\setminus\{i+1\}$.  
    
\subsubsection*{The inequality~\eqref{eq:cycle_1}}
This is Padberg's cycle inequality corresponding to the subset $D = E^{-}$, and hence is known to be implied by the McCormick inequalities when  $\lvert E^{-}\rvert$ is even. Now let $\lvert E^-\rvert$ be odd, say $\lvert E^-\rvert=2k+1$ for some non-negative integer $k$. Set $x_i=1/2$ for all $i\in[n]$. Then $(\vect x,\vect y)\in P$ implies
  \begin{multline*}
    y(E^+)-y(E^-)\stackrel{\eqref{eq:cycle_1}}{\leq} x(V^+)-x(V^-)+k=\frac{\lvert V^+\rvert-\lvert
      V^-\rvert}{2}+k=\frac{n-2(2k+1)}{2}+k\\
    =\frac{n-2k-2}{2} = \frac{\lvert E^+\rvert-1}{2},
  \end{multline*}
and consequently, for every $(\vect x,z)\in X(f)$, \[z=\sum_{i=1}^na_iy_i=\sum_{i\in E^+}a_iy_i+\sum_{i\in E^-}a_iy_i<\frac12\sum_{i\in E^+}a_i.\]  
Setting $y_i=1/2$ for all $i\in E^+$ and $y_i=0$ for all $i\in E^-$, we obtain a point $(\vect x,\vect y)\in P'$ with $\pi[f](\vect x,\vect y)=\left(\vect x,\,\frac12\sum_{i\in E^+}a_i\right)\not\in  X(f)$.

\subsubsection*{The inequality~\eqref{eq:cycle_2}}
The arguments here are similar to those for \eqref{eq:cycle_1}.

\subsection{Proof of Theorem~\ref{thm:cactus}}\label{subsec:cactus}
Let $G=(V,E)$ be a cactus graph with $k$ cycles and arbitrary edge weights. We want to show that for
the corresponding function $f$, $\X(f)=\pi[f](P)$ where $P$ is described by the McCormick
inequalities and at most $2k$ cycle inequalities. We prove this by induction on $k$, the number of
cycles. For $k=0$, $G$ is a tree, and the McCormick inequalities are sufficient. For $k=1$, we
proceed by induction on the number of edges that are not contained in the cycle. If there are no
such edges then $G$ is a cycle, and the claim follows from Theorem~\ref{thm:cycles}. Otherwise,
there are two graphs $G_1=(V_1,E_1)$ and $G_2=(V_2,E_2)$ with $V_1\cup V_2=V$, $E_1\cup E_2=E$,
$\lvert V_1\cap V_2\rvert=1$, $E_2\neq\emptyset$, and such that the cycle of $G$ is contained in
$G_1$. Let $f_1$ and $f_2$ be the bilinear functions corresponding to the graphs $G_1$ and $G_2$,
respectively. Since $G_2$ is cycle-free, $X(f_2)$ is described by the McCormick inequalities, and by
induction, $X(f_1)$ is described by the McCormick inequalities and at most two cycle inequalities. Now
the result follows from Corollary~\ref{cor:combination}. For $k\geq 2$, there are two graphs
$G_1=(V_1,E_1)$ and $G_2=(V_2,E_2)$ with $V_1\cup V_2=V$, $E_1\cup E_2=E$,
$\lvert V_1\cap V_2\rvert=1$, such that $k=k_1+k_2$ where $k_i$ ($i\in\{1,2\}$) is the number of
cycles in graph $G_i$. Again, let $f_1$ and $f_2$ be the bilinear functions corresponding to the graphs
$G_1$ and $G_2$, respectively. By induction, $X(f_1)$ is described by the McCormick inequalities and
at most $2k_1$ cycle inequalities, and $X(f_2)$ is described by the McCormick inequalities and
at most $2k_2$ cycle inequalities. The result follows from Corollary~\ref{cor:combination}.

\section{Conclusion  and open problems}

We have used an extension of Zuckerberg's geometric method for characterizing convex hulls of
subsets of the discrete $n$-cube to find extended formulations for the convex hulls of graphs of
bilinear functions corresponding to almost complete graphs with unit weights and cactus graphs with
arbitrary weights. We think that this approach can be used in more general situations, but
additional insights are needed to avoid the tedious case discussions as in the proof of Theorem~\ref{thm:cycles}.

A natural next test case for the method is the class of wheels. A wheel $W_{n-1}$ is the graph
with vertex set $V=\{1,\dots,n\}$ for $n\geq 5$, and edge set
\[E = \{\{1,2\}, \{2,3\}, \dots, \{n-2,n-1\} \} \cup\{\{i,n\}\,:\,1\leq i\leq n-1\}.\]
That is, $W_{n-1}$ is the cycle $C_{n-1}$ with spokes from the center vertex $n$. Since $W_{n-1}$ has a
$K_{4}$-minor, we know that $\QP(W_{n})$ needs more than the cycle inequalities for its
description. One extra facet-defining inequality for $\QP(W_{n-1})$ is
\begin{equation}\label{eq:wheelA}
\left\lfloor (n-1)/2 \right\rfloor x_{n} \,+\, \sum_{i=1}^{n}x_{i} \,-\, y(E) \le \left\lfloor (n-1)/2 \right\rfloor.
\end{equation}
which can be argued by first principles. Another inequality comes from the cut polytope of a
graph. \citet{barahona1986cut} introduced $\cut(G)$ as the convex hull of incidence vectors of the
cuts in $G$, and showed that for every odd bicycle wheel in $G$, there is a corresponding
facet-defining inequality for $\cut(G)$. An \emph{odd bicycle wheel} is the graph $W_{n-1} + \{v\}$
for odd $n-1$, where $G+\{v\}$ is the graph obtained from $G$ by joining every vertex of $G$ to a
new vertex $v$. When $G=W_{n-1}$ and $n-1$ is odd, then $G+\{n+1\}$ is an odd bicycle wheel and we
have exactly one odd bicycle wheel inequality for $\cut(W_{n-1}+\{n+1\})$. Mapping this inequality
to $\QP(W_{n-1})$ using the well-known linear bijection between $\QP(G)$ and $\cut(G + \{v\})$
\citep{de1990cut} leads to the inequality
\begin{equation}\label{eq:wheelB}
\left(\frac{n}{2}\right)x_{n} \,+\, 2\sum_{i=1}^{n-1}x_{i} \,-\, y(E) \le n-1
\end{equation}
defining a facet of $\QP(W_{n-1})$ when $n$ is odd. When $n-1=5$, the inequalities~\eqref{eq:wheelA}
and~\eqref{eq:wheelB} are sufficient to convexify $f$.

\begin{observation}[5-wheel]
  If $G = W_5$ and all edge weights are equal to $1$, then $\X(f) = \pi[f](\P)$ where $P$ is the
  polytope described by the McCormick inequalities~\eqref{eq:mccormick} together with
  \begin{align}
    2 x_6 + x_1 + \dotsb + x_5 - y(E) &\leq 2,\label{eq:wheel_1}\\
    3 x_6 + 2(x_1 + \dotsb + x_5) - y(E) &\leq 5,\label{eq:wheel_2}
  \end{align}
which are are exactly inequalities \eqref{eq:wheelA} and \eqref{eq:wheelB} for $n = 6$.
\end{observation}
We conjecture that this observation extends to any wheel $W_{n-1}$ with $n\ge 6$, $n$ even.

The open question on wheel graphs can be extended to a richer family of graphs called \emph{Halin
  graphs} which was introduced by \citet{halin1971studies}. A Halin graph is a planar graph obtained
from a tree without vertices of degree 2 by adding a cycle through all the leaves. Thus, a wheel is
the simplest kind of a Halin graph. The insights gained from generalizing the above observation on
wheel graphs might be useful to characterizing a polytope $\P$ that does not project onto $\QP(G)$
yet has $\X(f) = \pi[f](\P)$ and is a minimal such extension of $\X(f)$, when $G$ is a Halin graph.

\subsection*{Acknowledgements}
The first author thanks Jon Lee for bringing cactus graphs to his attention during stimulating
discussions at the Dagstuhl Seminar on Designing and Implementing Algorithms for Mixed-Integer
Nonlinear Optimization held at Schloss Dagstuhl in February 2018.

{
\newrefcontext[sorting=nyt]
\printbibliography
}

\newpage
\appendix

\section{Minimality proof for the polytope in Theorem~\ref{thm:clique_minus} }\label{sec:minimality_proof}
We want to show that each of the inequalities listed in Theorem~\ref{thm:clique_minus} is necessary in the
sense that omitting it leads to a polytope $P'$ with $\pi[f](P')\supsetneq X(f)$.
\subsection{The McCormick inequalities $y_{ij}\leq x_i$}
Let $P'$ be the polytope obtained from $P$ by omitting the
inequality $y_{i^*j^*}\leq x_{i^*}$, and consider the point $\vect x$ with $x_{i^*}=0$ and $x_i=1$
for all $i\in V\setminus\{i^*\}$. Then $(\vect x,z)\in X(f)$ if and only if
\[z=\left\lvert E(V\setminus\{i^*\})\right\rvert=
  \begin{cases}
    \binom{n-1}{2}-1 &\text{if }i^*\in V\setminus\{n-1,n\},\\
    \binom{n-1}{2} &\text{if }i^*\in \{n-1,n\}.
  \end{cases}
\]
If $\{i^*,j^*\}\neq\{n-1,n\}$ then we obtain a point $(\vect x,\vect y)\in P'$ by setting $y_{ij}=1$
for all $ij\in E(V\setminus\{i^*\})$, $y_{i^*j^*}=1$ and $y_{i^*j}=0$ for $j\neq j^*$. Then
$\pi[f](\vect x,\vect y)\not\in X(f)$ because $y(E)=\binom{n-1}{2}+1$. For
$i^*=n-1$, $j^*=n$, we obtain $(\vect x,\vect y)\in P'$ by setting $y_{in}=1$ and $y_{i,n-1}=0$ for
all $i\in V\setminus\{n-1,n\}$, $y_{n-1,n}=1$, and $y_{ij}=1-1/\binom{n-2}{2}$ for all $ij\in E(V\setminus\{n-1,n\})$.
Then $\pi[f](\vect x,\vect y)\not\in X(f)$ because
\[\sum_{ij\in E}y_{ij}=\binom{n-2}{2}-1+(n-2)=\binom{n-1}{2}-1.\]
\subsection{The inequalities~\eqref{eq:clique_minus_0}}
Without loss of generality, $i=1$. We obtain a point
$(\vect x,\vect y)\in P'$ by setting
\begin{align*}
  x_{n-1}=x_n=y_{n-1,n}&=1/2, & x_1=x_2=y_{12}&=5/6, & x_3=x_4=\dots=x_{n-2} &=0,\\
  y_{1,n-1}=y_{1n} &=0, & y_{2,n-1}=y_{2n} &=1/2,
\end{align*}
and $y_{ij}=0$ for all remaining $ij\in E$. Then $\pi[f](\vect x,\vect y)=(\vect x,\,11/6)\not\in X(f)$, because $(\vect x,z)\in
X(f)$ implies
\begin{multline*}
  z\geq(x_1+x_2-1)+(x_1+x_{n-1}-1)+(x_1+x_{n}-1)+(x_2+x_{n-1}-1)+(x_2+x_{n}-1)\\
  =2/3+4(1/3)=2.
\end{multline*}
\subsection{The inequalities~\eqref{eq:clique_minus_1}}
Let $t\in[n-2]$, and consider the point $(\vect x,\vect y)$ given by
\begin{align*}
  x_{n-1}=x_n=y_{n-1,n}&=1/2, \\
  x_{i} &=
          \begin{cases}
            1 &\text{for }1\leq i\leq t,\\
            0&\text{for }t+1\leq i\leq n-2,\\
          \end{cases}\\
  y_{i,n-1}=y_{in} &=
                     \begin{cases}
                       1/2 &\text{for }1\leq i\leq t,\\
                       0&\text{for }t+1\leq i\leq n-2,\\
                     \end{cases}\\
  y_{ij} &=1-\frac{2}{t(t-1)} &&\text{for }1\leq i<j\leq t,
\end{align*}
and $y_{ij}=0$ for all remaining $ij\in E$. Then $(\vect x,\vect y)\in P'$, where $P'$ is the
polytope obtained from $P$ by omitting~\eqref{eq:clique_minus_1} for $s=t$, and $\pi[f](\vect x,\vect y)=(\vect x,\,\binom{s+1}{2}-1)\not\in X(f)$, because $(\vect x,z)\in
X(f)$ implies
\[z\geq\binom{s}{2}+s=\binom{s+1}{2}.\]
\subsection{The inequalities~\eqref{eq:clique_minus_3}}
The following lemma shows that~\eqref{eq:clique_minus_3} cannot be omitted for any $s<(n-1)/2$.
\begin{lemma}\label{lem:violation_1}
  Let $t$ be an integer with $1\leq t<(n-1)/2$, and consider the point
  $(\vect x,\vect y)\in\reals^{n(n+1)/2}$ given by $x_{n-1}=1$, $x_n=y_{n-1,n}=0$,
\begin{align*}
  x_i &=\frac{t-1/2}{n-2} && \text{for }1\leq i\leq n-2,\\
  y_{i,n-1}=y_{in}&=0 && \text{for }i\leq n-2,\\
  y_{ij} &=\frac{t^2-1}{(n-2)(n-3)} && \text{for }1\leq i<j\leq n-2.
\end{align*}
Then $(\vect x,\vect y)$ satisfies all the constraints describing $P$
except~(\ref{eq:clique_minus_3}) for $s=t$. Moreover,
\[\pi[f](\vect x,\vect y)=\left(\vect x,\,(t^2-1)/2\right)\not\in X(f).\]
\end{lemma}
\begin{proof}
  All $y$-variables are non-negative so $y(E)\geq 0$. For the inequalities
  $y_{ij}\leq\min\{x_i,x_j\}$, we just need to check that
  \[\frac{t^2-1}{(n-2)(n-3)}\leq\frac{t-1/2}{n-2},\]
  or equivalently, $\phi(t)\leq 0$ where $\phi$ is the quadratic function
  $\phi(t)=(t^2-1)-(t-1/2)(n-3)$. This follows from $\phi(1)=(3-n)/2<0$ and
  $\phi((n-1)/2)=-(n-3)(n-5)/4\leq 0$.

  For~(\ref{eq:clique_minus_0}), we use the assumption $2t<n-1$:
  \[2x_i+x_{n-1}+x_n-y_{i,n-1}-y_{in}=\frac{2t-1}{n-2}+1< 2.\]

  For~(\ref{eq:clique_minus_1}), we use
  \begin{align*}
    x(V\setminus\{n-1,n\})+\frac{x_{n-1}+x_n}{2} &= t,\\
    y(E(V\setminus\{n-1,n\}))+\frac12\sum_{i=1}^{n-2}\left(y_{i,n-1}+y_{in}\right) &= \frac{t^2-1}{2}.
  \end{align*}
  Now~(\ref{eq:clique_minus_1}) follows from
  \[st-\frac{t^2-1}{2}-\binom{s+1}{2}=-\frac12\left[\left(s-\frac{2t-1}{2}\right)^2+t-\frac54\right]\leq 0\]
  for all integers $s$.

  For (\ref{eq:clique_minus_3}), we we have
  \begin{multline*}
    sx(V)-y(E)-y_{n-1,n}-\binom{s+1}{2} =
    s\left(t+\frac12\right)-\frac{t^2-1}{2}-\binom{s+1}{2}\\
    =-\frac12(s-t+1)(s-t-1)\ 
    \begin{cases}
      \leq 0 &\text{for }s\neq t,\\
      =1/2 &\text{for }s=t.
    \end{cases}
  \end{multline*}
  Finally, $\left(\vect x,\,(t^2-1)/2\right)\not\in X(f)$ because
  \[tx(V)-z-x_n-\binom{t+1}{2}=t\left(t+\frac12\right)-\frac{t^2-1}{2}-\binom{t+1}{2}=\frac12\]
  while
  \[tx(V)-z-x_n-\binom{t+1}{2}\leq 0\]
  is a valid inequality for $X(f)$.
\end{proof}
To conclude the proof, the next lemma shows that~\eqref{eq:clique_minus_3} cannot be omitted for any $s\geq
(n-1)/2$.
\begin{lemma}\label{lem:violation_2}
    Let $t$ be an integer with $(n-1)/2\leq t\leq n-2$, and consider the point $(\vect x,\vect
    y)\in\reals^{n(n+1)/2}$ given by $x_{n-1}=1$, $x_n=y_{n-1,n}=1/2$,
\begin{align*}
   x_i &=\frac{t-1}{n-2} && \text{for }1\leq i\leq n-2,\\
   y_{i,n-1}&=\frac{4t-n-2}{3(n-2)} \text{ and } y_{i,n}=\frac{4t-n-2}{6(n-2)} &&\text{for }i\leq n-2,\\
  y_{ij} &=\frac{2t^2-8t+2n+1}{2(n-2)(n-3)}   &&\text{for }1\leq i<j\leq n-2.
\end{align*}
Then $(\vect x,\vect y)$ satisfies all the constraints describing $P$
except~(\ref{eq:clique_minus_3}) for $s=t$. Moreover,
\[\pi[f](\vect x,\vect y)=\left(\vect x,\,(2t^2-3)/4\right)\not\in X(f).\]
\end{lemma}
\begin{proof}
  All $y$-variables are non-negative so $y(E)\geq 0$. For the inequalities
  $y_{ij}\leq\min\{x_i,x_j\}$, we need to check that
  \begin{align*}
    \frac{2t^2-8t+2n+1}{2(n-2)(n-3)} &\leq \frac{t-1}{n-2}, & \frac{4t-n-2}{3(n-2)} &\leq \frac{t-1}{n-2}.
  \end{align*}
  The first inequality is equivalent to $\phi(t)\leq 0$ where $\phi(t)=2t^2-8t+2n+1-2(n-3)(t-1)$,
  and this follows from
  \begin{align*}
    \phi\left(\frac{n-1}{2}\right)&=-\frac12(n^2-6n+7)\leq 0, & \phi\left(n-2\right)&=7-2n\leq 0.
  \end{align*}
  The second inequality follows from $4t-n-2-3(t-1)=t+1-n<0$.

  For~\eqref{eq:clique_minus_0}, we have
  \[2x_i+x_{n-1}+x_n-y_{i,n-1}-y_{in}=\frac{2t-2}{n-2}+\frac32-\frac{4t-n-2}{2(n-2)}=2.\]

  For~\eqref{eq:clique_minus_1}, we use
  \begin{align*}
    x(V\setminus\{n-1,n\})+\frac{x_{n-1}+x_n}{2} &= t-\frac14,\\
    y(E(V\setminus\{n-1,n\}))+\frac12\sum_{i=1}^{n-2}\left(y_{i,n-1}+y_{in}\right) &=
                                                                                     \frac{2t^2-8t+2n+1}{4}+\frac{4t-n-2}{4}\\
    &=\frac{2t^2-4t+n-1}{4}.
  \end{align*}
  Now~(\ref{eq:clique_minus_1}) follows from
  \begin{multline*}
    s\left(t-\frac14\right)-\frac{2t^2-4t+n-1}{4}-\binom{s+1}{2}=-\frac12\left[\left(s-\frac{4t-3}{4}\right)^2+\frac{n-t}{2}-\frac{17}{16}\right]\\
    \leq -\frac12\left[\left(s-\frac{4t-3}{4}\right)^2-\frac{1}{16}\right]\leq 0.
  \end{multline*}
  for all integers $s$.

  For (\ref{eq:clique_minus_3}), we have
  \[y(E)+y_{n,n-1}=\frac{2t^2-8t+2n+1}{4}+\frac{4t-n-2}{2}+\frac12=\frac{2t^2-1}{4},\]
  and then
  \begin{multline*}
    sx(V)-y(E)-y_{n-1,n}-\binom{s+1}{2} =
    s\left(t+\frac12\right)-\frac{2t^2-1}{4}-\binom{s+1}{2}\\
    =-\frac12\left[(s-t)^2-\frac12\right]
    \begin{cases}
      \leq 0 &\text{for }s\neq t,\\
      =1/4 &\text{for }s=t.
    \end{cases}
  \end{multline*}
  Finally, $\left(\vect x,\,(2t^2-3)/4\right)\not\in X(f)$ because
  \[tx(V)-z-x_n-\binom{t+1}{2}=t\left(t+\frac12\right)-\frac{2t^2-3}{4}-\binom{t+1}{2}=\frac34\]
  while
  \[tx(V)-z-x_n-\binom{t+1}{2}\leq 0\]
  is a valid inequality for $X(f)$.
\end{proof}

\section{Detailed case analysis in the proof of Lemma~\ref{lem:cycle_UB}}\label{sec:det-cycle_UB}

\begin{description}
\item[Case 1] $n-1,n\in E^+$. We may assume $\mu( X_1\cap X_{n-1})<x_n<x_1+x_{n-1}-\mu( X_1\cap X_{n-1})$ since otherwise $\delta_n=0$.
  \begin{description}
  \item[Case 1.1] $x_{n-1}\leqslant x_1$. In this case $\delta_{n-1}=\mu( X_{n-1}\setminus
    X_{1})$. 

If $x_n\leqslant x_{n-1}$ then 
    \begin{align*}
  \delta_n&=\mu( X_n\setminus X_{1})=\delta_{n-1}-(x_{n-1}-x_n)\\
     &=\gamma-\mu_{n-1}-\mu_{n}+x_n+\min\{x_{n-1},\,x_1\}-(x_{n-1}-x_n)\\
     &=\gamma-x_n-x_{n}+x_n+x_{n-1}-(x_{n-1}-x_n)=\gamma.
    \end{align*}
If $x_{n-1}\leqslant x_n\leqslant x_1$ then 
    \begin{align*}
  \delta_n&=\mu( X_{n}\setminus X_1)=\delta_{n-1}\\
     &=\gamma-\mu_{n-1}-\mu_{n}+x_n+\min\{x_{n-1},\,x_1\}\\
     &=\gamma-x_{n-1}-x_{n}+x_n+x_{n-1}=\gamma.
    \end{align*}
Finally, if $x_n\geqslant x_1$ then 
\begin{align*}
  \delta_n&=\mu( X_1\setminus X_n)=\delta_{n-1}-(x_n-x_1)\\
     &=\gamma-\mu_{n-1}-\mu_{n}+x_n+\min\{x_{n-1},\,x_1\}-(x_n-x_1)\\
     &=\gamma-x_{n-1}-x_{1}+x_n+x_{n-1}-(x_n-x_1)=\gamma.
    \end{align*}
  \item[Case 1.2] $x_{n-1}>x_1$. In this case $\delta_{n-1}=\mu( X_1\setminus X_{n-1})$. 

If $x_n\leqslant x_{1}$ then 
    \begin{align*}
  \delta_n&=\mu( X_n\setminus X_{1})=\delta_{n-1}-(x_{1}-x_n)\\
     &=\gamma-\mu_{n-1}-\mu_{n}+x_n+\min\{x_{n-1},\,x_1\}-(x_{1}-x_n)\\
     &=\gamma-x_n-x_{n}+x_n+x_{1}-(x_{1}-x_n)=\gamma.
    \end{align*}
If $x_{1}\leqslant x_n\leqslant x_{n-1}$ then 
    \begin{align*}
  \delta_n&=\mu( X_{1}\setminus X_n)=\delta_{n-1}\\
     &=\gamma-\mu_{n-1}-\mu_{n}+x_n+\min\{x_{n-1},\,x_1\}\\
     &=\gamma-x_{n}-x_{1}+x_n+x_{1}=\gamma.
    \end{align*}
Finally, if $x_n\geqslant x_{n-1}$ then 
\begin{align*}
  \delta_n&=\mu( X_1\setminus X_n)=\delta_{n-1}-(x_n-x_{n-1})\\
     &=\gamma-\mu_{n-1}-\mu_{n}+x_n+\min\{x_{n-1},\,x_1\}-(x_n-x_{n-1})\\
     &=\gamma-x_{n-1}-x_{1}+x_n+x_{1}-(x_n-x_{n-1})=\gamma.
    \end{align*}   
  \end{description}
\item[Case 2] $n-1,n\in E^-$. We may assume $1-\mu( X_1\cup X_{n-1})<x_n<1-\mu( X_1\cap X_{n-1})$ since otherwise $\delta_n=0$.
  \begin{description}
  \item[Case 2.1] $x_{n-1}\leqslant x_1$. In this case $\delta_{n-1}=\mu( X_{n-1}\setminus
    X_{1})=\mu( X_1\cup X_{n-1})-x_1$. 

If $x_n\leqslant 1-x_1$ then 
    \begin{align*}
  \delta_n&=\mu( X_n\cap X_{1})=x_n-(1-\mu( X_1\cup X_{n-1}))=\delta_{n-1}-(1-x_1-x_n)\\
     &=\gamma+\eta_{n-1}+\eta_{n}-x_n-x_{n-1}-x_1+\min\{x_{n-1},\,x_1\}+1-(1-x_1-x_n)\\
     &=\gamma+0+0-x_n-x_{n-1}-x_1+x_{n-1}+1-(1-x_1-x_n)=\gamma.
    \end{align*}
If $1-x_1\leqslant x_n\leqslant 1-x_{n-1}$ then 
    \begin{align*}
  \delta_n&=\mu([0,1]\setminus (X_{n}\cup X_1))=\mu( X_{n-1}\setminus
    X_{1})=\delta_{n-1}\\
     &=\gamma+\eta_{n-1}+\eta_{n}-x_n-x_{n-1}-x_1+\min\{x_{n-1},\,x_1\}+1\\
     &=\gamma+0+(x_n+x_1-1)-x_n-x_{n-1}-x_1+x_{n-1}+1=\gamma.
    \end{align*}
Finally, if $x_n\geqslant 1-x_{n-1}$ then 
\begin{align*}
  \delta_n&=\mu([0,1]\setminus(X_{n}\cup X_1))=\delta_{n-1}-(x_n+x_{n-1}-1)\\
     &=\gamma+\eta_{n-1}+\eta_{n}-x_n-x_{n-1}-x_1+\min\{x_{n-1},\,x_1\}+1-\eta_{n-1}\\
     &=\gamma+\eta_{n-1}+(x_n+x_1-1)-x_n-x_{n-1}-x_1+x_{n-1}+1-\eta_{n-1}=\gamma.
    \end{align*}
  \item[Case 2.2] $x_{n-1}>x_1$. In this case $\delta_{n-1}=\mu( X_1\setminus X_{n-1})=\mu( X_1\cup X_{n-1})-x_{n-1}$. 

If $x_n\leqslant 1-x_{n-1}$ then 
    \begin{align*}
  \delta_n&=\mu( X_n\cap X_{1})=x_n-(1-\mu( X_1\cup X_{n-1}))=\delta_{n-1}-(1-x_n-x_{n-1})\\
     &=\gamma+\eta_{n-1}+\eta_{n}-x_n-x_{n-1}-x_1+\min\{x_{n-1},\,x_1\}+1-(1-x_n-x_{n-1})\\
     &=\gamma+0+0-x_n-x_{n-1}-x_1+x_{1}+1-(1-x_n-x_{n-1})=\gamma.
    \end{align*}
If $1-x_{n-1}\leqslant x_n\leqslant 1-x_{1}$ then 
    \begin{align*}
  \delta_n&=\mu( X_{n}\cap X_1)=\mu( X_{1}\setminus
    X_{n-1})=\delta_{n-1}\\
     &=\gamma+\eta_{n-1}+\eta_{n}-x_n-x_{n-1}-x_1+\min\{x_{n-1},\,x_1\}+1\\
     &=\gamma+(x_{n-1}+x_n-1)+0-x_n-x_{n-1}-x_1+x_{1}+1=\gamma.
    \end{align*}
Finally, if $x_n\geqslant 1-x_{1}$ then 
\begin{align*}
  \delta_n&=\mu([0,1]\setminus (X_{n}\cup X_1))=\delta_{n-1}-(x_n+x_{1}-1)\\
     &=\gamma+\eta_{n-1}+\eta_{n}-x_n-x_{n-1}-x_1+\min\{x_{n-1},\,x_1\}+1-\eta_{n}\\
     &=\gamma+(x_{n-1}+x_n-1)+\eta_n-x_n-x_{n-1}-x_1+x_{1}+1-\eta_{n}=\gamma.
    \end{align*}
  \end{description}
\item[Case 3] $n-1\in E^+,\,n\in E^-$. We may assume $\mu( X_{n-1}\setminus X_1)<x_n<1-\mu( X_{1}\setminus X_{n-1})$ since otherwise $\delta_n=0$.
  \begin{description}
  \item[Case 3.1] $x_1+x_{n-1}\leqslant 1$. In this case $\delta_{n-1}=\mu( X_1\cap
    X_{n-1})$.

If $x_n\leqslant x_{n-1}$ then 
    \begin{align*}
  \delta_n&=\delta_{n-1}-(x_{n-1}-x_n)\\
     &=\gamma-\mu_{n-1}+\eta_{n}+x_{n-1}-\max\{0,\,x_{n-1}+x_1-1\}-(x_{n-1}-x_n)\\
     &=\gamma-x_n+0+x_{n-1}-0-(x_{n-1}-x_n)=\gamma.
    \end{align*}
If $x_{n-1}\leqslant x_n\leqslant 1-x_{1}$ then 
    \begin{align*}
  \delta_n&=\delta_{n-1}\\
     &=\gamma-\mu_{n-1}+\eta_{n}+x_{n-1}-\max\{0,\,x_{n-1}+x_1-1\}\\
     &=\gamma-x_{n-1}+0+x_{n-1}-0=\gamma.
    \end{align*}
Finally, if $x_n\geqslant 1-x_{1}$ then 
\begin{align*}
  \delta_n&=\delta_{n-1}-(x_1+x_{n}-1)\\
     &=\gamma-\mu_{n-1}+\eta_{n}+x_{n-1}-\max\{0,\,x_{n-1}+x_1-1\}-(x_1+x_{n}-1)\\
     &=\gamma-x_{n-1}+(x_1+x_n-1)+x_{n-1}-0-(x_1+x_{n}-1)=\gamma.
    \end{align*}
  \item[Case 3.2] $x_1+x_{n-1}> 1$. In this case $\delta_{n-1}=\mu([0,1]\setminus(X_1\cup
    X_{n-1}))$.

If $x_n\leqslant 1-x_{1}$ then 
    \begin{align*}
  \delta_n&=\delta_{n-1}-(1-x_1-x_n)\\
     &=\gamma-\mu_{n-1}+\eta_{n}+x_{n-1}-\max\{0,\,x_{n-1}+x_1-1\}-(1-x_1-x_n)\\
     &=\gamma-x_{n}+0+x_{n-1}-(x_{n-1}+x_1-1)-(1-x_1-x_n)=\gamma.
    \end{align*}
If $1-x_1\leqslant x_n\leqslant x_{n-1}$ then 
    \begin{align*}
  \delta_n&=\delta_{n-1}\\
     &=\gamma-\mu_{n-1}+\eta_{n}+x_{n-1}-\max\{0,\,x_{n-1}+x_1-1\}\\
     &=\gamma-x_{n}+(x_n+x_1-1)+x_{n-1}-(x_{n-1}+x_1-1)=\gamma.
    \end{align*}
Finally, if $x_n\geqslant x_{n-1}$ then 
\begin{align*}
  \delta_n&=\delta_{n-1}-(x_{n}-x_{n-1})\\
     &=\gamma-\mu_{n-1}+\eta_{n}+x_{n-1}-\max\{0,\,x_{n-1}+x_1-1\}-(x_{n}-x_{n-1})\\
     &=\gamma-x_{n-1}+(x_n+x_1-1)+x_{n-1}-(x_{n-1}+x_1-1)-(x_{n}-x_{n-1})=\gamma.
    \end{align*}
  \end{description}

\item[Case 4] $n-1\in E^-,\,n\in E^+$. We may assume $\mu( X_{n-1}\setminus X_1)<x_n<1-\mu( X_{n-1}\setminus X_1)$ since otherwise $\delta_n=0$.
  \begin{description}
  \item[Case 4.1] $x_1+x_{n-1}\leqslant 1$. In this case $\delta_{n-1}=\mu( X_1\cap
    X_{n-1})$.

If $x_n\leqslant x_{1}$ then 
    \begin{align*}
  \delta_n&=\delta_{n-1}-(x_{1}-x_n)\\
     &=\gamma+\eta_{n-1}-\mu_{n}+x_1-\max\{0,\,x_{n-1}+x_1-1\}-(x_{1}-x_n)\\
     &=\gamma+0-x_{n}+x_1-0-(x_{1}-x_n)=\gamma.
    \end{align*}
If $x_{1}\leqslant x_n\leqslant 1-x_{n-1}$ then 
    \begin{align*}
  \delta_n&=\delta_{n-1}\\
     &=\gamma+\eta_{n-1}-\mu_{n}+x_1-\max\{0,\,x_{n-1}+x_1-1\}\\
     &=\gamma+0-x_{1}+x_1-0=\gamma.
    \end{align*}
Finally, if $x_n\geqslant 1-x_{n-1}$ then 
\begin{align*}
  \delta_n&=\delta_{n-1}-(x_{n-1}+x_{n}-1)\\
     &=\gamma+\eta_{n-1}-\mu_{n}+x_1-\max\{0,\,x_{n-1}+x_1-1\}-(x_{n-1}+x_{n}-1)\\
     &=\gamma+(x_{n-1}+x_n-1)-x_{1}+x_1-0-(x_{n-1}+x_{n}-1)=\gamma.
    \end{align*}

  \item[Case 4.2] $x_1+x_{n-1}> 1$. In this case $\delta_{n-1}=\mu([0,1]\setminus(X_1\cup
    X_{n-1}))$.

If $x_n\leqslant 1-x_{n-1}$ then 
    \begin{align*}
  \delta_n&=\delta_{n-1}-(1-x_{n-1}-x_n)\\
     &=\gamma+\eta_{n-1}-\mu_{n}+x_1-\max\{0,\,x_{n-1}+x_1-1\}-(1-x_{n-1}-x_n)\\
     &=\gamma+0-x_n+x_1-(x_{n-1}+x_1-1)-(1-x_{n-1}-x_n)=\gamma.
    \end{align*}
If $1-x_{n-1}\leqslant x_n\leqslant x_{1}$ then 
    \begin{align*}
  \delta_n&=\delta_{n-1}\\
     &=\gamma+\eta_{n-1}-\mu_{n}+x_1-\max\{0,\,x_{n-1}+x_1-1\}\\
     &=\gamma+(x_{n-1}+x_n-1)-x_{n}+x_1-(x_{n-1}+x_1-1)=\gamma.
    \end{align*}
Finally, if $x_n\geqslant x_{1}$ then 
\begin{align*}
  \delta_n&=\delta_{n-1}-(x_{n}-x_{1})\\
     &=\gamma+\eta_{n-1}-\mu_{n}+x_1-\max\{0,\,x_{n-1}+x_1-1\}-(x_{n}-x_{1})\\
     &=\gamma+(x_{n-1}+x_n-1)-x_1+x_1-(x_{n-1}+x_1-1)-(x_{n}-x_{1})=\gamma.\qedhere
    \end{align*}
  \end{description}
\end{description}

\end{document}